\newtheorem{theorem}{Theorem}
\newtheorem{definition}[theorem]{Definition}
\newtheorem{lemma}[theorem]{Lemma}
\newtheorem{proposition}[theorem]{Proposition}
\newtheorem{remark}[theorem]{Remark}
\numberwithin{equation}{section}
\numberwithin{theorem}{section}
\title[Capillary cmc hypersurfaces]{Area estimates for capillary cmc hypersurfaces with nonpositive Yamabe invariant}
 \author[L. F. Pessoa]{Leandro F. Pessoa} 
 \address{Departamento de Matemática, Universidade Federal do Piauí, 64049-550, Teresina - Piauí, Brazil}
 \email{leandropessoa@ufpi.edu.br}
 \author[E. V\'eras]{Erisvaldo V\'eras} 
 \address{Departamento de Matemática, Universidade Federal do Piauí,
 64049-550, Teresina - Piauí, Brazil}
 \email{erisvaldoveras35@gmail.com}
 \author[B. Vieira]{Bruno Vieira}
 \address{Centro de Educa\c c\~ao Aberta e \`a Dist\^ancia, Universidade Federal do Piau\'i-UFPI, 64049-550, Teresina - Piauí, Brazil}
 \email{bruno\_vmv@ufpi.edu.br}
\subjclass[2020]{Primary: 53C42, 53A10. Secondary: 53C21.}
\keywords{$cmc$ hypersurfaces, Capillary, Yamabe invariant}
\keywords{}
\begin{document}

\begin{abstract}
We prove area estimates for stable capillary $cmc$ (minimal) hypersurfaces $\Sigma$ with nonpositive Yamabe invariant that are properly immersed in a Riemannian $n$-dimensional manifold $M$ with scalar curvature $R^M$ and mean curvature of the boundary $H^{\partial M}$ bounded from below. We also prove a local rigidity result in the case $\Sigma$ is embedded and $\mathcal{J}$-energy-minimizing. In this case, we show that $M$ locally splits along $\Sigma$ and is isometric to $(-\varepsilon,\varepsilon)\times \Sigma, dt^2 + e^{-2Ht}g)$, where $g$ is Einstein, or Ricci flat, $H\geq 0$ and $\partial\Sigma$ is totally geodesic.
\end{abstract}

\maketitle

\section{Introduction}

The study of capillary surfaces can be traced back to the celebrated work of T. Young \cite{Young} that investigated the behavior of an incompressible liquid in a container in the absence of gravity. Variationally, capillary surfaces arise as critical points of an energy functional involving the area of the interface surface between the fluid and the air, the wet area on the boundary of the container, and the constant angle of contact between the interface surface and the boundary of the ambient space (cf. also \cite{Finn}). 

A special case occurs when the angle of contact is a right angle. In this case, the surface is called free boundary. This class of surfaces with boundary has been intensively studied in recent years due to its connection with the theory of closed minimal surfaces (see, e.g. \cite{Br,CFS,CFP,FPZ,FL,FS,FS1,Li1,Li2,LM,MNS} and references therein).

A great deal of work has been devoted to studying capillary stable surfaces, that is, surfaces that minimize the energy up to the second order. For instance, inspired by the work of J.C.C. Nitsche \cite{Ni}, A. Ros and E. Vergasta \cite{RV} proved that minimal free boundary hypersurfaces immersed in a ball of a space form are totally umbilical. The case of free boundary surfaces with constant mean curvature ($cmc$ for short) was settled combining efforts from \cite{RV} and \cite{Nunes} (see also \cite{Barbosa}). This result was finally solved for general capillary $cmc$ hypersurfaces in \cite{Wang-Xia}.

Based on the rigidity theorems for area-minimizing closed surfaces obtained in \cite{BBN,CG,MM,Nunes1}, L.C. Ambrozio \cite{Ambrozio} investigated the rigidity of area-minimizing free boundary surfaces embedded in a Riemannian three-dimensional manifold $M$, establishing an important relation between the scalar curvature of $M$ and the mean curvature of $\partial M$ to the topology (Euler characteristic) of stable minimal surfaces, which leads to a local splitting of the ambient manifold along the surface. The same problem was investigated by \cite{Li,Longa} for capillary $cmc$ surfaces.

For higher dimensions, rigidity results were obtained for closed hypersurfaces in \cite{Cai, Moraru25}, where the topological invariant considered was the so-called Yamabe invariant instead of the Euler characteristic. Indeed, for closed manifolds, the Yamabe invariant can be viewed as a generalization of the Euler characteristic because a manifold with nonpositive Yamabe invariant does not admit a metric of positive scalar curvature (cf. \cite[Lem.1.2]{Schoen89}). Furthermore, A. Barros and C. Cruz \cite{Barros} proved area estimates for compact, stable minimal hypersurfaces with free boundary in terms of the Yamabe invariant and the scalar curvature of the ambient. They also obtained rigidity and local splitting results for manifolds possessing an area-minimizing (or energy-minimizing) hypersurface. Recently, de Almeida and A. Mendes \cite{almeida_mendes}, among other results, extended the principal results of \cite{Barros} to the framework of free boundary marginally outer trapped surfaces (MOTS) within initial data sets with boundary.

Let $(M,\bar g)$ be an oriented Riemannian manifold with smooth boundary. In this note, we are interested in extending the rigidity results obtained in \cite{Barros} to the context of compact capillary $cmc$, or minimal, hypersurfaces $\Sigma$ with smooth boundary $\partial \Sigma$. 

Before stating our main results, we will introduce the Yamabe invariant for manifolds with boundary. Let $(\Sigma^{n-1},g)$ be a Riemannian manifold $n\geq 4$ with a nonempty boundary $\partial \Sigma$. Given $(a,b)\in\mathbb{R}\times\mathbb{R}-\{(0,0)\}$, we consider for all $\varphi \in C^\infty_+(\Sigma)$
\begin{equation*}
	Q_{g}^{a,b}(\varphi)=\frac{\displaystyle \int_{\Sigma}\left( \frac{4(n-2)}{n-3}||\nabla\varphi||_g^2+R_g\varphi^2\right)d\sigma +2\int_{\partial\Sigma}k_g\varphi^2d\sigma_{\partial\Sigma}}{\displaystyle  \left[ a \int_{\Sigma}\varphi^{\frac{2(n-1)}{n-3}}d\sigma +b\left( \int_{\partial\Sigma}\varphi^{\frac{2(n-2)}{n-3}}\right)^{\frac{n-1}{n-2}}  \right]^{\frac{n-3}{n-1}}},
\end{equation*}
where $k_g$ denotes the mean curvature of $\partial \Sigma$ and $R_g$ the scalar curvature of $\Sigma$. The Yamabe constant is defined by
$$Q_g^{a,b}(\Sigma,\partial \Sigma) = \inf_{\varphi \in C^\infty_+(\Sigma)} Q_g^{a,b}(\varphi),$$
with $(a,b) \in \{(1,0),(0,1)\}$. The Yamabe constant is invariant under conformal changes (see, for example, \cite{EscobarU,EscobarT}). Finding a critical point of the functional $Q_{g}^{1,0}$ corresponds to solving the Yamabe problem of finding a metric $\tilde{g} = u^{\frac{4}{n-3}}g$ with constant scalar curvature $R_{\tilde{g}}$ and with mean curvature of the boundary $k_{\tilde{g}}=0$. Similarly, the critical points of the functional $Q_{g}^{0,1}$ give solutions to the Yamabe problem of finding a metric $\tilde{g} = u^{\frac{4}{n-3}}g$ with constant mean curvature of the boundary $k_{\tilde{g}}$ and scalar curvature $R_{\tilde{g}}=0$.

Let $\mathcal{C}(\Sigma)$ be the space of all conformal classes $[g]$ of a metric $g$ in $\Sigma$. The Yamabe invariant of a compact manifold $(\Sigma,g)$ with boundary $\partial \Sigma$ is defined as
\begin{equation*}
	\sigma^{a,b}(\Sigma,\partial\Sigma)=\displaystyle\sup_{[g]\in\mathcal{C}(\Sigma)}\displaystyle\inf_{\varphi>0}Q^{a,b}_g(\varphi).
\end{equation*}

The first result of this note gives estimates from below for the area, and boundary area, of a compact capillary $cmc$ hypersurface. These estimates are natural extensions of the results obtained in \cite[Thm.1]{Barros}.

\begin{theorem}\label{teo1_intro}
	Let $M^n$ be a Riemannian manifold $(n\geq 4)$ with boundary $\partial M$. Let $\Sigma^{n-1}$ be a two-sided, stable, capillary $cmc$ hypersurface, properly immersed in $M$ with contact angle $\theta \in (0,\pi)$. 
	\begin{enumerate}
		\item [i)] Suppose that $H^{\partial M} + H\cos\theta \geq 0$, $\inf R^M + \frac{n}{n-1}H^2 < 0$ and $\sigma^{1,0}(\Sigma,\partial \Sigma)<0$. Then the area of $\Sigma$ satisfies
		\begin{equation}\label{vol_ineq_thm1}
			A(\Sigma)^{\frac{2}{n-1}}\geq\dfrac{Q^{1,0}_g(\Sigma,\partial \Sigma)}{\inf R^M +\frac{n}{n-1}H^2}\geq\dfrac{\sigma^{1,0}(\Sigma,\partial \Sigma)}{\inf R^M + \frac{n}{n-1}H^2}.
		\end{equation}
		\item [ii)] Suppose that  $R^M  +\frac{n}{n-1}H^2\geq 0$, $\inf  H^{\partial M} + H\cos\theta<0$ and $\sigma^{0,1}(\Sigma,\partial \Sigma)<0$. Then the area of $\partial\Sigma$ satisfies
		\begin{equation}\label{area_ineq_thm1}
			A(\partial\Sigma)^{\frac{1}{n-2}}\geq\frac{\sin \theta}{2}\dfrac{Q^{0,1}_g(\Sigma,\partial \Sigma)}{\inf H^{\partial M} + H\cos\theta}\geq \frac{\sin \theta}{2}\dfrac{\sigma^{0,1}(\Sigma,\partial \Sigma)}{\inf H^{\partial M} + H\cos\theta}.
		\end{equation}
	\end{enumerate}
\end{theorem}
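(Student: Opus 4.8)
The plan is to derive both estimates from a single master inequality, obtained by feeding positive test functions into the stability inequality and comparing the result against the numerator of the Yamabe quotient. I would start from the second variation of the capillary $\mathcal{J}$-energy: $\mathcal{J}$-stability gives, for every $\varphi\in C^\infty(\Sigma)$,
\[\int_\Sigma|\nabla\varphi|^2\,d\sigma\ \geq\ \int_\Sigma\big(\ric^M(N,N)+|A|^2\big)\varphi^2\,d\sigma+\int_{\partial\Sigma}q_\theta\,\varphi^2\,d\sigma_{\partial\Sigma},\]
where $N$ is the unit normal, $A$ the second fundamental form, and $q_\theta$ the capillary boundary integrand coming from the contact angle. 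The crucial point is that, because the volume (Lagrange) term is built into $\mathcal{J}$, this holds with no mean-zero constraint on $\varphi$, so positive test functions are admissible. I would then bring in the intrinsic scalar curvature through the Gauss equation $R_g=R^M-2\ric^M(N,N)+H^2-|A|^2$, together with $|A|^2\geq H^2/(n-1)$, to obtain the pointwise bound $\ric^M(N,N)+|A|^2\geq\tfrac12\big(R^M-R_g+\tfrac{n}{n-1}H^2\big)$; this is exactly where the factor $\tfrac{n}{n-1}H^2$ enters.

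Next I would insert this into the numerator of $Q^{a,b}_g(\varphi)$ and use the elementary inequality $\tfrac{4(n-2)}{n-3}\geq 2$ (valid for $n\geq 4$) to trade the conformal-Laplacian gradient coefficient for twice the stability inequality, expecting to reach
\[\int_\Sigma\Big(\tfrac{4(n-2)}{n-3}|\nabla\varphi|^2+R_g\varphi^2\Big)+2\int_{\partial\Sigma}k_g\varphi^2\ \geq\ \int_\Sigma\big(R^M+\tfrac{n}{n-1}H^2\big)\varphi^2+2\int_{\partial\Sigma}(k_g+q_\theta)\varphi^2.\]
The heart of the matter — and the step I expect to be the main obstacle — is the boundary identity $k_g+q_\theta=\frac{1}{\sin\theta}\big(H^{\partial M}+H\cos\theta\big)$. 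To establish it I would work along $\partial\Sigma$ with the two orthonormal frames $\{\nu,N\}$ and $\{\bar\nu,\bar N\}$ of the normal plane, related by the rotation through the contact angle $\theta$, and decompose the ambient second fundamental forms of $\Sigma$ and of $\partial M$ into their $\partial\Sigma$-traces and their $(\nu,\nu)$, $(\bar\nu,\bar\nu)$ components. Tracing the rotation relations over $T\partial\Sigma$ should eliminate the mean curvature of $\partial\Sigma$ inside $\partial M$ and collapse everything onto $H^{\partial M}+H\cos\theta$; getting the signs and the $1/\sin\theta$ factor exactly right (and matching the sign convention chosen for $q_\theta$) is the delicate bookkeeping. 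With this identity the right-hand side becomes $\int_\Sigma\big(R^M+\tfrac{n}{n-1}H^2\big)\varphi^2+\frac{2}{\sin\theta}\int_{\partial\Sigma}\big(H^{\partial M}+H\cos\theta\big)\varphi^2$.

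From this master inequality both statements are a short distance away. For (i) the hypothesis $H^{\partial M}+H\cos\theta\geq0$ lets me discard the boundary integral and bound the bulk below by $\big(\inf R^M+\tfrac{n}{n-1}H^2\big)\int_\Sigma\varphi^2$; for (ii) the hypothesis $R^M+\tfrac{n}{n-1}H^2\geq0$ lets me discard the bulk integral and bound the boundary below by $\frac{2}{\sin\theta}\big(\inf H^{\partial M}+H\cos\theta\big)\int_{\partial\Sigma}\varphi^2$. In each case I apply Hölder's inequality — the estimate $\int_\Sigma\varphi^2\leq A(\Sigma)^{2/(n-1)}\big(\int_\Sigma\varphi^{2(n-1)/(n-3)}\big)^{(n-3)/(n-1)}$ on $\Sigma$, and its analogue with exponent $2(n-2)/(n-3)$ on $\partial\Sigma$ — whose right-hand factor is precisely the denominator of $Q^{1,0}_g$ (resp. $Q^{0,1}_g$). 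Since the relevant coefficient is negative by hypothesis, multiplying through reverses the inequality and yields $Q^{a,b}_g(\varphi)\geq(\text{coefficient})\cdot A(\Sigma)^{2/(n-1)}$ (resp. $\cdot\,A(\partial\Sigma)^{1/(n-2)}$) for every $\varphi$; taking the infimum over $\varphi$ gives the bound on $Q^{a,b}_g(\Sigma,\partial\Sigma)$, and dividing by the negative quantity (reversing the inequality once more) produces the first inequalities in \eqref{vol_ineq_thm1} and \eqref{area_ineq_thm1}. The final inequalities then follow from $Q^{a,b}_g(\Sigma,\partial\Sigma)\leq\sigma^{a,b}(\Sigma,\partial\Sigma)<0$, again dividing by a negative denominator.
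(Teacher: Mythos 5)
Your proposal is correct and follows essentially the same route as the paper: the pointwise bound $\ric(N,N)+|A|^2\geq\tfrac12(R^M-R_g+\tfrac{n}{n-1}H^2)$ from Gauss plus Newton, the boundary identity (your $k_g+q_\theta=\tfrac{1}{\sin\theta}(H^{\partial M}+H\cos\theta)$ is exactly the paper's Lemma 3.1 divided by $\sin\theta$), the coefficient comparison $\tfrac{4(n-2)}{n-3}\geq 2$, and H\"older with the sign reversals coming from the negative coefficients. The only presentational difference is that you make explicit the (correct) remark that $\mathcal{J}$-stability requires no mean-zero constraint on the test function, which the paper leaves implicit.
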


\begin{remark}
Similar assumptions relating $H^{\partial M}$ to $H\cos\theta$, as well as $R^M$ to $H$, were recently considered in the study of the connection between the index and the geometry and topology of capillary $cmc$ surfaces, see \cite{hong_saturnino}. 
\end{remark}

The stability considered in Theorem \ref{teo1_intro} refers to the second variation of the following generalized $\mathcal{J}$-energy functional 
\begin{equation*}
	\mathcal{J}(t)=\mathcal{A}(t)-(\cos\theta)\mathcal{W}(t)+ H_0\mathcal{V}(t),
\end{equation*}
where $H_0 \in [0,\infty)$, $\mathcal{A}$ is the area functional, $\mathcal{V}$ is the volume functional and $\mathcal{W}$ is the wetting area functional  (see Section \ref{sec_prel}). The critical points for the $\mathcal{J}$-energy functional are capillary hypersurfaces with constant mean curvature equal to $H_0$ and contact angle $\theta \in (0,\pi)$.

In our second result, we investigate the rigidity of manifolds containing an energy-minimizing capillary hypersurface with respect to the $\mathcal{J}$-energy functional. 
The result should be compared with \cite[Thm.2,I)]{Barros}, and provides a counterpart for the $cmc$ case that was not considered in \cite[Thm.3]{Barros}. For the definition of an infinitesimally rigid hypersurface, we refer to Definition \ref{def2} in Section \ref{sec_prel}.

\begin{theorem}\label{teo2_intro}
Let $M^n$ be a Riemannian manifold $(n\geq 4)$ with boundary $\partial M$. Let $\Sigma^{n-1}$ be a $\mathcal{J}$-minimizing two-sided hypersurface, properly embedded in $M$ with contact angle $\theta \in (0,\pi)$, such that $H^{\partial M} + H\cos\theta \geq 0$. If $\inf R^M + \frac{n}{n-1}H^2 < 0$, $\sigma^{1,0}(\Sigma,\partial \Sigma)<0$ and the equality holds in \eqref{vol_ineq_thm1}, then $\Sigma$ is an infinitesimally rigid free-boundary minimal hypersurface and there is a neighborhood of $\Sigma$ in $M$ isometric to $(-\epsilon,\epsilon)\times\Sigma$ endowed with the metric $dt^{2}+g$, where $g$ is the induced metric on $\Sigma$ which is Einstein with totally geodesic boundary $\partial\Sigma$.
\end{theorem}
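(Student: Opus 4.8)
The plan is to run the equality case through the proof of Theorem~\ref{teo1_intro}(i) and then upgrade the resulting pointwise (infinitesimal) rigidity to a local splitting. First I would revisit each inequality used to establish \eqref{vol_ineq_thm1}: the stability inequality applied to the first eigenfunction $\varphi_0$ of the conformal (Yamabe) operator, the Gauss equation rewriting $|A|^2 + \ric^M(\nu,\nu)$ in terms of $R^M$, $R_g$ and $H$, the algebraic bound $|A|^2 \geq \frac{H^2}{n-1}$, the pointwise bound $R^M \geq \inf R^M$, the sign of the capillary boundary term (which under $H^{\partial M} + H\cos\theta \geq 0$ is nonnegative), and the H\"older inequality controlling $\int_\Sigma\varphi_0^2$ by $A(\Sigma)^{2/(n-1)}$. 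Since \eqref{vol_ineq_thm1} is a two-step chain, full equality forces equality in every one of these steps simultaneously. The gradient term then gives $\nabla\varphi_0 \equiv 0$, so the Yamabe minimizer is constant; the umbilicity step gives $|A|^2 = \frac{H^2}{n-1}$, so $\Sigma$ is totally umbilic; the scalar-curvature step gives $R^M \equiv \inf R^M$ along $\Sigma$; and Gauss then yields $\ric^M(\nu,\nu) = -\frac{H^2}{n-1}$ together with $R_g \equiv \inf R^M + \frac{n}{n-1}H^2$, a negative constant. The vanishing of the boundary term, combined with the constancy of $\varphi_0$ and the natural Robin condition it satisfies, gives $k_g \equiv 0$ on $\partial\Sigma$; a closer reading of the capillary boundary term yields $H^{\partial M} + H\cos\theta = 0$ and the boundary rigidity. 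This is precisely the infinitesimal rigidity of Definition~\ref{def2}.

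Next I would produce the splitting. Because the constant $\varphi_0$ is a nowhere-vanishing Jacobi field (the nullity of the stability form is spanned by the constants), a capillary version of the implicit function theorem produces a smooth foliation $\{\Sigma_t\}_{t\in(-\epsilon,\epsilon)}$ of a neighborhood of $\Sigma$ by constant mean curvature hypersurfaces meeting $\partial M$ at the prescribed angle, with $\Sigma_0 = \Sigma$ and unit initial lapse. Using that $\Sigma$ is $\mathcal{J}$-minimizing, together with the second equality $Q_g^{1,0}(\Sigma,\partial\Sigma) = \sigma^{1,0}(\Sigma,\partial\Sigma)$ (so that $g$ realizes the Yamabe invariant, which for a nonpositive invariant forces $g$ to be Einstein with totally geodesic boundary), I would propagate the infinitesimal rigidity to every leaf by a maximum-principle argument in the spirit of Bray--Brendle--Neves. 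Each $\Sigma_t$ is then umbilic with the same curvature identities, which identifies the metric on the neighborhood with a warped product $dt^2 + e^{-2Ht}g$ whose leaves are orthogonal to $\partial M$; in particular the contact angle is necessarily $\pi/2$, so $\Sigma$ is free boundary.

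Finally, I would invoke the global lower bound on $R^M$ to trivialize the warping. Since equality forced $R^M|_{\Sigma_0} = \inf R^M$ to be the global minimum, the full gradient of $R^M$ vanishes along the interior of $\Sigma$, so $\partial_\nu R^M \equiv 0$. A computation along the foliation shows $\partial_\nu R^M$ is proportional to $H\,R_g$; since $R_g \neq 0$, minimality of $R^M$ along $\Sigma$ forces $H = 0$. Equivalently, a nonzero $H$ would make $R^M$ dip below $\inf R^M$ on one side of $\Sigma$, a contradiction. With $H = 0$ the leaves are totally geodesic, the warping is trivial, and the neighborhood is isometric to $(-\epsilon,\epsilon)\times\Sigma$ with the product metric $dt^2 + g$, where $g$ is Einstein and $\partial\Sigma$ is totally geodesic, and $\Sigma$ is a free-boundary minimal hypersurface.

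I expect the main obstacle to be the boundary analysis rather than the interior curvature bookkeeping: pinning down the exact capillary boundary term in the second variation of $\mathcal{J}$, verifying that it is nonnegative precisely under $H^{\partial M} + H\cos\theta \geq 0$, extracting the boundary rigidity at equality, and—most delicately—constructing the foliation and running the maximum principle all the way up to the free boundary, i.e.\ a capillary adaptation of the Bray--Brendle--Neves deformation.
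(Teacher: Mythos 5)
Your first stage (forcing equality in every inequality behind \eqref{vol_ineq_thm1} to obtain umbilicity, $R^{M}=\inf R^{M}$, $Ric(N,N)=-\frac{H^2}{n-1}$, vanishing of the boundary term, $k_g\equiv 0$, and an Einstein induced metric with totally geodesic boundary) is exactly the paper's Proposition~\ref{prop5}, and the foliation you invoke is its Proposition~\ref{Prop7}. The genuine gap is in how you obtain $\theta=\pi/2$. You deduce orthogonality to $\partial M$ \emph{from} the warped-product structure, but the leaves $\Sigma_t$ meet $\partial M$ at the prescribed angle $\theta$ by construction, and the warped-product splitting itself cannot be established before the $\cot\theta$-terms are eliminated: the capillary analogue of the Bray--Brendle--Neves ODE is not $H'(t)\ge R_2(t)(H(t)^2-H^2)$ but the inequality \eqref{ineq_H_teo2},
\begin{equation*}
H'(t)\;\ge\;\Bigl[R_0(t)\int_0^t\eta(s)\,ds+R_1(t)\bigl(H(t)-H\bigr)\Bigr]\cot\theta+R_2(t)\bigl(H(t)^2-H^2\bigr),
\end{equation*}
where $R_0(0)\eta(0)>0$ precisely because $\inf R^M+\frac{n}{n-1}H^2<0$, so a Gronwall argument alone does not close. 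The paper's key move, absent from your plan, is a second-order analysis at $t=0$: $\mathcal J$-minimization forces $H'(0)=0$, one computes $H''(0)=R_0(0)\eta(0)\cot\theta$, and either sign of $H''(0)\neq 0$ makes $H(t)$ monotone on one side of $0$, contradicting $\mathcal J(t)\ge\mathcal J(0)$; hence $\cot\theta=0$. Only after this does the differential inequality reduce to one where Gronwall yields $H(t)\equiv H$ and the leaves become stable.

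Two further remarks. Your mechanism for $H=0$ (along the splitting $\partial_t R^M|_{t=0}=2HR_g$ with $R_g=\inf R^M+\frac{n}{n-1}H^2<0$, so minimality of $R^M$ on $\Sigma$ forces $H=0$) is a correct and arguably cleaner alternative to the paper's route, which instead applies Theorem~\ref{teo1_intro} to each leaf to get $A(\Sigma_t)\ge A(\Sigma)$ and compares with $\frac{d}{ds}A(\Sigma_s)=-H\int_\Sigma\rho_s$ to conclude $H\le 0$; but both arguments are available only after the rigidity has been propagated to all leaves, i.e.\ after the step you skipped. Finally, once $\theta=\pi/2$ and $H=0$ are in hand, the paper does not redo the splitting: it quotes the free-boundary minimal case of Barros--Cruz to obtain the product metric $dt^2+g$, so your concluding warped-product discussion is needed only in the form of that reduction.
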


\begin{remark}
The rigidity on the contact angle $\theta$ has also been obtained in \cite[Thm.B]{Longa}. In case $\Sigma$ is minimal, we can also deduce the rigidity of the angle, namely $\theta = \pi/2$, under the assumptions of the item $ii)$ in Theorem \ref{teo1_intro} and assuming that the equality holds in \eqref{area_ineq_thm1}. Moreover, under these hypotheses, it is curious to observe that the splitting holds for $cmc$ hypersurfaces $\Sigma$ if we assume that $\partial \Sigma$ is locally area maximizing in $\partial M$. This is in contrast to the assumptions in \cite[Thm.7]{Ambrozio} and \cite[Thm.B]{Longa}.
\end{remark}

In our last result, we provide a capillary $cmc$ counterpart of \cite[Thm.2,II)\&Thm.3]{Barros}. 

\begin{theorem}\label{teo3_intro}
Let $M^n$ be a Riemannian manifold $(n\geq 4)$ with boundary $\partial M$. Let $\Sigma^{n-1}$ be a $\mathcal{J}$-minimizing two-sided hypersurface, properly embedded in $M$ with contact angle $\theta \in (0,\pi)$, such that $H^{\partial M} + H\cos\theta \geq 0$. If $R^M + \frac{n}{n-1}H^2 \geq 0$ and $\sigma^{1,0}(\Sigma,\partial \Sigma)\leq 0$, then $\Sigma$ is an infinitesimally rigid capillary $cmc$ hypersurface and there is a neighborhood of $\Sigma$ in $M$ isometric to $(-\epsilon,\epsilon)\times\Sigma$ endowed with the metric $dt^{2}+e^{-2Ht}g$, where $g$ is the induced metric in $\Sigma$ which is Ricci flat with a totally geodesic boundary $\partial\Sigma$.
\end{theorem}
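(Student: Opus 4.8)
To prove Theorem \ref{teo3_intro}, my plan is to adapt the three-step scalar-curvature rigidity scheme---infinitesimal rigidity, construction of a constant mean curvature foliation, and propagation of the rigidity---to the capillary setting, reusing the computation behind Theorem \ref{teo1_intro}. First I would establish the infinitesimal rigidity. Since $\Sigma$ is $\mathcal{J}$-minimizing it is in particular stable, so the second variation of $\mathcal{J}$ is nonnegative on every $\varphi$. Substituting the traced Gauss equation into the stability inequality, using $|A|^2\ge(\tr A)^2/(n-1)$ and the hypothesis $H^{\partial M}+H\cos\theta\ge 0$ to control the boundary contribution exactly as in Theorem \ref{teo1_intro}, one bounds the numerator of $Q^{1,0}_g(\varphi)$ below by $\int_\Sigma\bigl(R^M+\tfrac{n}{n-1}H^2\bigr)\varphi^2\,d\sigma\ge 0$. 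Hence $Q^{1,0}_g(\Sigma,\partial\Sigma)\ge 0$, and together with $Q^{1,0}_g\le\sigma^{1,0}(\Sigma,\partial\Sigma)\le 0$ this forces $Q^{1,0}_g(\Sigma,\partial\Sigma)=\sigma^{1,0}(\Sigma,\partial\Sigma)=0$. Evaluating the chain at a Yamabe minimizer $u$ and tracking the equality cases then yields: $\Sigma$ is totally umbilical, $R^M+\tfrac{n}{n-1}H^2=0$ along $\Sigma$, the minimizer $u$ is constant so the induced metric $g$ is itself scalar flat with $\partial\Sigma$ totally geodesic and $H^{\partial M}+H\cos\theta=0$, and the constant function lies in the kernel of the Jacobi operator with its Robin boundary condition, giving $|A|^2+\ric^M(\nu,\nu)\equiv 0$.

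Next I would construct the foliation. Because the constant is a Jacobi field, an implicit function theorem argument produces a smooth family $\{\Sigma_t\}_{t\in(-\epsilon,\epsilon)}$ of hypersurfaces, realized as normal graphs $x\mapsto\exp_x(w(t,x)\nu(x))$ over $\Sigma_0=\Sigma$, each of constant mean curvature $H(t)$ and meeting $\partial M$ at the fixed contact angle $\theta$; here the capillary condition enters as a Robin boundary condition, and the nondegeneracy required by the implicit function theorem is precisely the infinitesimal rigidity just obtained. After normalizing $\int_\Sigma(w(t,\cdot)-t)\,d\sigma=0$, the lapse $\phi_t=\langle\partial_t,\nu_t\rangle$ is positive for small $|t|$.

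Then I would propagate the rigidity along the foliation. Using that $\Sigma_0$ is $\mathcal{J}$-minimizing, I would compare $\mathcal{J}(\Sigma_t)$ with $\mathcal{J}(\Sigma_0)$: the first variation combined with the saturation $R^M+\tfrac{n}{n-1}H^2=0$ and the sign hypothesis forces $t\mapsto H(t)$ to be monotone, while the reverse inequality coming from $R^M+\tfrac{n}{n-1}H^2\ge 0$ on each leaf pins it down, so every nearby leaf is again umbilical and saturates the scalar-curvature bound. This makes the lapse constant on each leaf; reparametrizing so that $\phi_t\equiv 1$ and integrating the umbilicity relation $\partial_t\bar g_t=2\lambda(t)\bar g_t$ gives the metric $dt^2+f(t)^2g$, and the constancy of the mean curvature forces $\lambda$ to be constant, whence $f(t)^2=e^{-2Ht}$. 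Finally, the induced metric $g$ is scalar flat and realizes the Yamabe invariant $\sigma^{1,0}(\Sigma,\partial\Sigma)=0$ of its conformal class; the vanishing of the first variation of the Yamabe functional across conformal classes at such a maximizing scalar-flat metric forces the traceless Ricci of $g$ to vanish, so $g$ is Einstein, and with $R_g=0$ this gives $\ric^g=0$, while $\partial\Sigma$ remains totally geodesic throughout.

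The main obstacle I anticipate is the propagation step in the presence of the free boundary: one must run the foliation and the monotonicity argument while keeping $\partial\Sigma_t$ sliding along $\partial M$ at the prescribed angle, ensuring that the Robin condition and the sign of the boundary terms persist on every leaf. Obtaining the precise exponential warping $e^{-2Ht}$ rather than a merely positive factor, and upgrading scalar-flatness to full Ricci-flatness through the Yamabe maximality, are the delicate payoffs of this step; these are the capillary analogues of the corresponding arguments in \cite{Barros}.
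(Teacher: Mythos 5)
Your overall architecture coincides with the paper's: infinitesimal rigidity of $\Sigma$ from stability plus the Gauss equation and the Yamabe minimizer (this is the paper's Proposition \ref{prop6}, and your step is correct), then the capillary $cmc$ foliation $\{\Sigma_t\}$ with fixed contact angle (the paper's Proposition \ref{Prop7}), then propagation and the warped-product splitting $dt^2+e^{-2Ht}g$ via the umbilicity evolution equation. The first and second steps, and the final splitting computation, match the paper.

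The genuine gap is in the propagation step. You assert that ``the first variation combined with the saturation $R^M+\tfrac{n}{n-1}H^2=0$ and the sign hypothesis forces $t\mapsto H(t)$ to be monotone.'' The first variation of $\mathcal{J}$ along the foliation only gives $\mathcal{J}'(t)=\int_{\Sigma_t}(H-H(t))\rho_t$; it converts sign information about $H(t)-H$ into sign information about $\mathcal{J}(t)-\mathcal{J}(0)$, but it produces no monotonicity of $H(t)$ by itself, so as written the step is circular. The missing ingredient is a second-order input: the lapse function $\rho_t$ satisfies the Jacobi-type system \eqref{lapse_cor}, whose first equation expresses $H'(t)$ in terms of $\Delta\rho_t$ and $\mathrm{Ric}(N_t,N_t)+|A_t|^2$. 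The paper multiplies this by $u_t^2$, where $u_t$ is the Yamabe minimizer on the leaf $\Sigma_t$, integrates, and uses the divergence theorem, Young's inequality, the Robin boundary condition in \eqref{lapse_cor}, and Lemma \ref{lema1} to arrive at the differential inequality $H'(t)\psi_1(t)\geq\psi_2(t)\,(H(t)-H)\cot\theta$ (after discarding the nonnegative terms coming from $R^M+\tfrac{n}{n-1}H^2\geq 0$ and $\sigma^{1,0}\leq 0$). Only then does Gronwall's lemma applied to $H-H(t)$ give $H(t)\geq H$ for $t>0$ and $H(t)\leq H$ for $t<0$, after which the first variation and the $\mathcal{J}$-minimizing property force $H(t)\equiv H$ and $\mathcal{J}(t)\equiv\mathcal{J}(0)$, so that every leaf is stable, hence infinitesimally rigid by Proposition \ref{prop6}, the lapse is harmonic with Neumann condition and therefore constant, and the splitting follows. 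Without the leafwise pairing against the Yamabe minimizer and the Gronwall step, your argument does not close; the rest of your outline, including the Einstein/Ricci-flat upgrade via the equality case of the Yamabe problem, is consistent with the paper.
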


\begin{remark}
It is clear that we can use $\sigma^{0,1}(\Sigma,\partial \Sigma)\leq 0$ instead of $\sigma^{1,0}(\Sigma,\partial \Sigma)\leq 0$ in Theorem \ref{teo3_intro}.
\end{remark}

\noindent \textbf{Acknowledgements.} The first author is funded by the Conselho Nacional de Desenvolvimento Cient\'ifico e Tecnol\'ogico - CNPq/Brazil, Grants: 422900/2021-4, 306543/2022-2. The second author is granted by the Funda\c c\~ao de Amparo \`a Pesquisa do Piau\'i - FAPEPI/Brazil, Scholarship: 029/2021.

\section{Preliminaries}\label{sec_prel}

Let $(M^n,\bar g)$ be a Riemannian manifold with boundary $\partial M$.  Let $(\Sigma^{n-1},g)$ be a smooth compact manifold with boundary $\partial \Sigma$, and let $\phi \colon \Sigma^{n-1} \rightarrow M^{n}$ be a smooth proper immersion, that is, $\phi(\Sigma)\cap \partial M = \phi(\partial \Sigma)$. In this note, we assume that $\Sigma$ is two-sided, in the sense that there is a unit normal vector field $N$ globally defined in $\Sigma$. We denote by $\nu$ the outward unit conormal for $\partial \Sigma$ in $\Sigma$, and by $\overline{N}$ the outward unit normal for $\partial M$. We consider $\overline{\nu}$ as the unit normal vector field defined along $\partial \Sigma$ in $\partial M$ such that the bases $\{N, \nu\}$ and $\{\overline{N}, \overline{\nu}\}$ have the same orientation in $(T\partial \Sigma)^{\perp}$.

We denote by $A$ and $H$ the second fundamental form and the (nonnormalized) mean curvature of $\Sigma$, respectively. We say that $\Sigma$ is a constant mean curvature hypersurface, cmc for short, if $H$ is constant along $\Sigma$. In this case, we will always consider $N$ in such a way that $H\geq 0$.

Given an immersion $\phi \colon \Sigma^{n-1} \rightarrow M^{n}$ we consider a proper smooth variation $\Phi \colon \Sigma \times (-\varepsilon,\varepsilon) \rightarrow M$ of $\phi$, that is, every map $\phi_t \colon \Sigma \rightarrow M$ defined by $\phi_t(x) = \Phi(x,t)$ is a proper immersion for all $t \in (-\varepsilon,\varepsilon)$ and $\phi_0 = \phi$. For a fixed variation $\Phi$ we introduce the area functional $\mathcal{A} \colon (-\varepsilon,\varepsilon) \rightarrow \mathbb{R}$ defined by
\begin{equation*}
\mathcal{A}(t)=\int_{\Sigma}dA_{\phi^{*}_{t}\bar g},
\end{equation*} 
where $dA_{\varphi^{*}_{t}\bar g}$ denotes the area element of $\Sigma$ with respect to the induced metric $\varphi^{*}_{t}\bar g$, and the volume functional $\mathcal{V} \colon (-\varepsilon,\varepsilon) \rightarrow \mathbb{R}$ by  
\begin{equation*}
\mathcal{V}(t)=\int_{\Sigma\times [0,t]}\Phi^{*}(dV_{\bar g}),
\end{equation*}
where  $dV_{\bar g}$ denotes the volume element of $M$. Notice that $\mathcal{V}$ expresses the volume between $\varphi_{t}(\Sigma)$ and $\varphi_{0}(\Sigma)$. We say that the variation $\Phi$ is volume-preserving if $\mathcal{V}(t)=0$ for every $t \in (-\epsilon, \epsilon)$.

Finally, we consider the wetting area functional $\mathcal{W}\colon (-\epsilon,\epsilon)\rightarrow \mathbb{R}$ defined by 
\begin{equation*}
\mathcal{W}(t)=\int_{\partial\Sigma\times [0,t]}\Phi^{*}(dA_{\partial M}),
\end{equation*} 
where $dA_{\partial M}$ is the area element of $\partial M$.

Fix two real numbers $\theta \in (0,\pi)$ and $H_0\in [0,\infty)$. We define the $\mathcal{J}$-energy functional $\mathcal{J}\colon (-\epsilon,\epsilon)\rightarrow \mathbb{R}$ by 
\begin{equation*}
	\mathcal{J}(t)=\mathcal{A}(t)-(\cos\theta)\mathcal{W}(t)+ H_0\mathcal{V}(t).
\end{equation*}
The formula for the first variation of the $\mathcal{J}$-energy functional is (see, e.g. \cite{BdCE,Ros})
\begin{eqnarray*}
\mathcal{J}'(0)&=&\int_{\Sigma}(H_0-H)f dA + \int_{\partial{\Sigma}}\bar g(\xi,\nu-(\cos\theta)\overline{\nu})dL,
\end{eqnarray*}
where $f=\bar g(\xi,N)$ and $\xi$ is the variation vector field associated to the variation $\Phi$.

An immersion $\varphi \colon \Sigma \rightarrow M$ is said to be a capillary $cmc$ hypersurface if it is a critical point of the energy functional $\mathcal{J}$ for every variation of $\varphi$. 
It is clear from the first variation formula that $\Sigma$ is a capillary $cmc$ hypersurface if and only if $\Sigma$ has constant mean curvature $H_0\geq 0$ and $\bar g(N,\overline{N}) = \cos \theta$ along $\partial \Sigma$. The angle $\theta$ is called the contact angle of the capillary hypersurface. In particular, when $\theta = \pi/2$ we say that $\Sigma$ is a free boundary cmc hypersurface.

For a capillary $cmc$ hypersurface with contact angle $\theta\in(0,\pi)$ the orthonormal bases $\{N,v\}$ and $\{\overline{N},\bar{\nu}\}$ are related by the following equations
\begin{equation}
	\left\{
	\begin{aligned}
		\overline{N} &= (\cos\theta)N + (\sin\theta)\nu \\
		\bar{\nu} &= -(\sin\theta)N + (\cos\theta)\nu.
	\end{aligned}
	\right.
\end{equation}
Furthermore, if $\Phi$ is a proper variation of $\varphi$, then the second variation of the energy functional $\mathcal{J}$ is given by
\begin{equation*}
\mathcal{J}''(0) =\int_{\Sigma}||\nabla f||^2-(Ric(N,N)+|A|^2)f^2-\int_{\partial\Sigma}qf^2,
\end{equation*}
where
\begin{equation*}
q=\frac{1}{\sin\theta}II^{\partial M}(\overline{\nu},\overline{\nu})+(\cot\theta) A(\nu,\nu),
\end{equation*}
and $II(v,w) = \bar g(\nabla_v \overline{N},w)$ is the second fundamental form of $\partial M$ with respect to $-\overline{N}$ (see \cite[Appendix]{Ros}).

We shall say that a capillary $cmc$ hypersurface $\varphi \colon \Sigma\rightarrow M$ is stable when $\mathcal{J}''(0) \geq 0$ for any proper variation $\Phi$ of $\varphi$.

We end this section by introducing the notion of infinitesimally rigid hypersurface. The minimal case was considered in \cite{Ambrozio}, and the $cmc$ case in \cite{Longa}. 

\begin{definition}\label{def2}
Let $\Sigma$ be a two-sided capillary hypersurface properly immersed in $M$. 
\begin{enumerate}
\item [i)] When $\Sigma$ is $cmc$, with $H>0$, it is said infinitesimally rigid if it is totally umbilical, the scalar curvature $R^{M}=\inf R^{M}$, $Ric(N,N)=-\frac{H^2}{n-1}$ along $\Sigma$, $H^{\partial M} =\inf H^{\partial M}$ along $\partial\Sigma$ and $\Sigma$ is Einstein with respect to the induced metric.
\item [ii)] When $\Sigma$ is minimal it is said infinitesimally rigid if it is totally geodesic, the scalar curvature $R^{M} = \inf R^{M}$, $Ric(N,N)=0$ along $\Sigma$, $H^{\partial M}=\inf H^{\partial M}$ along $\partial\Sigma$ and $\Sigma$ is Einstein with respect to the induced metric.
\end{enumerate}
\end{definition}

For an infinitesimally rigid capillary hypersurface $\Sigma$ one can prove the existence of a local foliation around $\Sigma$ by $cmc$ capillary hypersurfaces $\Sigma_t$. For a proof of the next result, we refer to \cite[Prop.3]{Barros} or \cite[Prop.3.2]{Longa}.

\begin{proposition}[\cite{Barros,Longa}]\label{Prop7}
Let $M^n$ be a Riemannian manifold with nonempty boundary such that $H^{\partial M}$ and $R^{M}$ are limited from below. Assume that $M^n$ contains a properly embedded capillary $cmc$  hypersurface $\Sigma$ with contact angle $\theta \in (0,\pi)$. If $\Sigma$ is infinitesimally rigid, then there exists a small neighborhood of $\Sigma$ in $M$ foliated by capillary $cmc$ hypersurfaces $\Sigma_t$ with contact angle $\theta$ and isotopic to $\Sigma$.
\end{proposition}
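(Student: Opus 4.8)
The plan is to produce the nearby leaves as solutions of the capillary $cmc$ equation by means of the implicit function theorem, with the infinitesimal rigidity of $\Sigma$ entering precisely as the spectral information that makes the linearized problem solvable and the resulting family transverse. First I would fix a parametrization of the hypersurfaces near $\Sigma$ that keeps their boundaries on $\partial M$. Because the contact angle $\theta$ lies in $(0,\pi)$, the unit normal $N$ is not tangent to $\partial M$ along $\partial\Sigma$, so the naive normal graphs $x\mapsto\exp_x(u(x)N(x))$ would drag $\partial\Sigma$ off $\partial M$. To avoid this I would choose a smooth vector field $X$ on a neighborhood of $\Sigma$ in $M$ which is transverse to $\Sigma$ and tangent to $\partial M$ along $\partial M$; flowing $\Sigma$ by $X$ and then allowing graphs assigns to each small $u\in C^{2,\alpha}(\Sigma)$ a properly immersed hypersurface $\Sigma_u$, with $\Sigma_0=\Sigma$, whose boundary $\partial\Sigma_u$ automatically lies on $\partial M$. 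Writing $H(u)$ for the mean curvature of $\Sigma_u$ and $\langle N_u,\overline N\rangle$ for the cosine of its contact angle along $\partial\Sigma_u$, consider
\[
\mathcal{F}(u,h)=\Big( H(u)-h,\ \langle N_u,\overline N\rangle-\cos\theta \Big)\in C^{0,\alpha}(\Sigma)\times C^{1,\alpha}(\partial\Sigma),\qquad h\in\mathbb{R},
\]
whose zeros are exactly the capillary hypersurfaces of constant mean curvature $h$ and contact angle $\theta$ near $\Sigma$; clearly $\mathcal{F}(0,H)=0$.

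Next I would linearize $\mathcal F$ at $(0,H)$. A standard computation identifies the differential in the direction $(f,\dot h)$ with $(Lf-\dot h,\ \mathcal B f)$, where $Lf=\Delta f+(\ric(N,N)+|A|^2)f$ is the Jacobi operator read off from $\mathcal J''(0)$ and $\mathcal B f=\partial_\nu f-q f$ is the associated Robin boundary operator coming from the term $-\int_{\partial\Sigma}qf^2$ in the second variation. Here the infinitesimal rigidity is decisive: in the $cmc$ case $\Sigma$ is totally umbilical with $\ric(N,N)=-H^2/(n-1)$, and in the minimal case $\Sigma$ is totally geodesic with $\ric(N,N)=0$, so in both cases $\ric(N,N)+|A|^2=0$ and $L$ reduces to $\Delta$. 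Moreover the rigidity conditions force the second variation to degenerate, i.e. the first eigenvalue of $(L,\mathcal B)$ is zero; let $\varphi_1>0$ denote a corresponding first eigenfunction, so that $L\varphi_1=0$ on $\Sigma$ and $\mathcal B\varphi_1=0$ on $\partial\Sigma$. This positive Jacobi field is the transverse direction of the foliation to be constructed.

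Then I would solve $\mathcal F=0$ along a one-parameter family. Decompose $u=t\,\varphi_1+v$ with $\int_\Sigma v\,\varphi_1\,dA=0$, so that $t$ records the component of $u$ along the kernel direction $\varphi_1$, and apply the implicit function theorem to solve for $(v,h)$ in terms of $t$. The restriction of the linearized operator $(v,\dot h)\mapsto(Lv-\dot h,\mathcal Bv)$ to $\{v\in C^{2,\alpha}(\Sigma):\int_\Sigma v\,\varphi_1\,dA=0\}\oplus\mathbb{R}$ is an isomorphism onto $C^{0,\alpha}(\Sigma)\times C^{1,\alpha}(\partial\Sigma)$; this is the Fredholm/invertibility step, and it rests on the elliptic theory for the Robin problem together with the fact that $\int_\Sigma\varphi_1\,dA>0$, which lets one solve for the free constant $\dot h$ and remove the kernel. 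The implicit function theorem then yields smooth families $t\mapsto v_t$ and $t\mapsto h(t)$ with $v_0=0$, $h(0)=H$, so that $u_t:=t\varphi_1+v_t$ defines hypersurfaces $\Sigma_t:=\Sigma_{u_t}$ of constant mean curvature $h(t)$ and contact angle $\theta$. Differentiating the defining equations at $t=0$ and pairing with $\varphi_1$ gives $\dot h(0)=0$ and hence $\partial_t u_t|_{t=0}=\varphi_1>0$, so the leaves have strictly positive normal speed; after shrinking $\epsilon$ they are pairwise disjoint, sweep out a neighborhood of $\Sigma$, and are isotopic to $\Sigma$, which is the desired foliation.

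The principal obstacle is the interplay between the capillary boundary condition and the elliptic solvability. One must (i) construct the boundary-respecting parametrization so that the contact-angle component of $\mathcal F$ is well defined and genuinely linearizes to the Robin operator $\mathcal B=\partial_\nu-q$ dictated by $\mathcal J''(0)$, and (ii) show that infinitesimal rigidity indeed forces the first eigenvalue of $(L,\mathcal B)$ to vanish with a positive eigenfunction $\varphi_1$, so that the decomposition $u=t\varphi_1+v$ turns the linearization into an isomorphism; this is where the full strength of Definition \ref{def2} is used. Granting these two points, the remaining regularity, transversality, and isotopy statements follow by routine arguments.
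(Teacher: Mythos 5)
Your proposal is correct and follows essentially the same route as the paper's proof, which is given only by citation to \cite[Prop.~3]{Barros} and \cite[Prop.~3.2]{Longa}: there, too, one parametrizes nearby hypersurfaces by graphs along a vector field tangent to $\partial M$, observes that infinitesimal rigidity collapses the Jacobi--Robin pair to the Neumann Laplacian with constant (positive) kernel, and applies the implicit function theorem after splitting off that kernel direction. Your variant of prescribing the constant $h$ as an extra unknown, rather than subtracting the mean value of the mean curvature, is an equivalent formulation of the same invertibility step.
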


\section{Area estimates}\label{sect_1}

In this section, we will prove the area estimates for stable capillary $cmc$ hypersurfaces $\Sigma$ stated in Theorem \ref{teo1_intro}. 

We begin with a useful lemma which will be crucial also for the proof of the other results. In what follows, for ease of notation, we will use $H$ instead of $H_0$ to denote the constant mean curvature of $\Sigma$.

\begin{lemma}[Lemma 3.1 in \cite{Longa}]\label{lema1}
Let $M^{n}$ be a Riemannian manifold with nonempty boundary and let $\Sigma^{n-1}$ be a two-sided capillary $cmc$ hypersurface immersed in $M$ with contact angle $\theta\in(0,\pi)$ and mean curvature equal to $H\geq 0$. Then
	\begin{equation*}
		II^{\partial M}(\overline{\nu},\overline{\nu})+(\cos \theta) A(\nu,\nu)+(\sin \theta) k_{g}=H^{\partial M}+H \cos\theta,
	\end{equation*}
 where $k_g$ is the mean curvature of $\partial\Sigma$ and $H^{\partial M}$ is the mean curvature of $\partial M$.
\end{lemma}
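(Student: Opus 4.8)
The plan is to view $\partial\Sigma$ as a codimension-two submanifold of $M$ whose normal bundle carries two orthonormal frames, $\{N,\nu\}$ and $\{\overline{N},\overline{\nu}\}$, related by a rotation through the contact angle $\theta$ via the structure equations recalled in Section~\ref{sec_prel}. The identity will fall out of expressing a single geometric object --- the mean curvature vector of $\partial\Sigma$ in $M$ --- in these two frames and matching components.

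Concretely, I would fix a local orthonormal frame $\{e_1,\dots,e_{n-2}\}$ of $T\partial\Sigma$ and set $\vec H_{\partial\Sigma}=\sum_{i=1}^{n-2}(\nabla_{e_i}e_i)^{\perp}$, the mean curvature vector of $\partial\Sigma$ in $M$, where $\perp$ is the projection onto the normal plane $\mathrm{span}\{N,\nu\}=\mathrm{span}\{\overline{N},\overline{\nu}\}$. Since both frames are orthonormal, taking the $\overline{N}$-component and expanding $\overline{N}=(\cos\theta)N+(\sin\theta)\nu$ gives
\begin{equation*}
\bar g(\vec H_{\partial\Sigma},\overline{N})=(\cos\theta)\,\bar g(\vec H_{\partial\Sigma},N)+(\sin\theta)\,\bar g(\vec H_{\partial\Sigma},\nu).
\end{equation*}
The whole proof then reduces to identifying these three inner products with the quantities in the statement.

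First I would compute the $N$-component: since $e_i\in T\Sigma$ and, in the convention of Section~\ref{sec_prel}, $A(X,Y)=\bar g(\nabla_X Y,N)$, one has $\bar g(\nabla_{e_i}e_i,N)=A(e_i,e_i)$, and completing $\{e_i\}$ to a frame of $T\Sigma$ by $\nu$ yields $\bar g(\vec H_{\partial\Sigma},N)=\sum_i A(e_i,e_i)=H-A(\nu,\nu)$. Next, the $\nu$-component is governed by the mean curvature of $\partial\Sigma$ inside $\Sigma$; with the divergence convention $k_g=\dive_{\partial\Sigma}\nu=\sum_i\bar g(\nabla_{e_i}\nu,e_i)$ one gets $\bar g(\vec H_{\partial\Sigma},\nu)=\sum_i\bar g(\nabla_{e_i}e_i,\nu)=-k_g$. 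Finally, since $e_i\in T\partial M$ and $II^{\partial M}$ is taken with respect to $-\overline{N}$, differentiating $\bar g(e_i,\overline{N})=0$ shows $\bar g(\nabla_{e_i}e_i,\overline{N})=-II^{\partial M}(e_i,e_i)$; tracing over $i$ and splitting off the $\overline{\nu}$-direction of $T\partial M$ gives $\bar g(\vec H_{\partial\Sigma},\overline{N})=II^{\partial M}(\overline{\nu},\overline{\nu})-H^{\partial M}$.

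Substituting these three identities into the displayed relation and rearranging yields $II^{\partial M}(\overline{\nu},\overline{\nu})+(\cos\theta)A(\nu,\nu)+(\sin\theta)k_g=H^{\partial M}+H\cos\theta$, as claimed. I do not expect a genuine obstacle: the argument is a direct projection-and-trace computation, and the only real care needed is bookkeeping the sign conventions of Section~\ref{sec_prel} --- the orientation in the rotation relating the two frames, the normal $-\overline{N}$ used for $II^{\partial M}$, and the divergence sign for $k_g$ --- so that all three contributions enter with the signs above. As a consistency check I would specialize to the free boundary case $\theta=\pi/2$, where $\overline{N}=\nu$ and the relation collapses to $II^{\partial M}(\overline{\nu},\overline{\nu})+k_g=H^{\partial M}$, i.e. $k_g=\sum_i II^{\partial M}(e_i,e_i)$, which is exactly the splitting of $\tr II^{\partial M}$ along $\partial\Sigma$ and its conormal.
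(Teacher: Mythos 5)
Your proof is correct and is essentially the paper's own argument: both trace $\nabla_{e_i}e_i$ over an orthonormal frame of $T\partial\Sigma$, project onto $\overline{N}=(\cos\theta)N+(\sin\theta)\nu$, and convert the two partial traces into $H-A(\nu,\nu)$ and $H^{\partial M}-II^{\partial M}(\overline{\nu},\overline{\nu})$ via the completed frames $\{\nu,e_1,\dots,e_{n-2}\}$ of $T\Sigma$ and $\{\overline{\nu},e_1,\dots,e_{n-2}\}$ of $T\partial M$. Packaging this as the $\overline{N}$-component of the mean curvature vector of $\partial\Sigma$ is only a cosmetic difference, and your sign bookkeeping matches the paper's conventions.
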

\begin{proof} Let $\{e_{i}\}^{n-2}_{i=1}$ be an orthonormal basis for  $T\partial \Sigma$. Since $\Sigma^{n-1}$ is $cmc$, we obtain
	\begin{equation*}
		A(\nu,\nu)+\sum_{i=1}^{n-2}A(e_i,e_i)=H.
	\end{equation*}
	Thus, 
	\begin{eqnarray*}
		(\cos\theta) A(\nu,\nu) +(\sin\theta) k_g - H\cos \theta&=&\cos\theta \left(H-\sum_{i=1}^{n-2}A(e_i,e_i)\right) +(\sin\theta) k_g - H\cos \theta \\
		&=& -\cos \theta \sum_{i=1}^{n-2}A(e_i,e_i)+(\sin\theta) k_g\\
		&=& \sum_{i=1}^{n-2}\bar g(-\nabla_{e_i}e_i,\overline{N}) \\
		&=& \sum_{i=1}^{n-2} II^{\partial M}(e_{i},e_{i}).
	\end{eqnarray*}
However, $\{\overline{\nu},e_1,\ldots,e_{n-2}\}$ is an orthonormal referential for $T\partial M$, and we have
	\begin{eqnarray*}
		II^{\partial M}(\overline{\nu},\overline{\nu})+(\cos \theta) A(\nu,\nu)+(\sin\ \theta) k_{g}-H \cos\theta&=&II^{\partial M}(\overline{\nu},\overline{\nu})+\sum_{i=1}^{n-2}II^{\partial M}(e_i,e_i)\\
		&=& H^{\partial M}.
	\end{eqnarray*}
Therefore,
	\begin{equation*}
		II^{\partial M}(\overline{\nu},\overline{\nu})+(\cos \theta) A(\nu,\nu)+(\sin \theta) k_{g}=H^{\partial M}+H \cos\theta.
	\end{equation*}
\end{proof}

We will restate Theorem \ref{teo1_intro} for the reader's convenience.

\begin{theorem}\label{teo1}
	Let $M^n$ be a Riemannian manifold $(n\geq 4)$ with boundary $\partial M$. Let $\Sigma^{n-1}$ be a two-sided, stable, capillary $cmc$ hypersurface, properly immersed in $M$ with contact angle $\theta \in (0,\pi)$. 
	\begin{enumerate}
		\item [i)] Suppose that $H^{\partial M} + H\cos\theta \geq 0$, $\inf R^M + \frac{n}{n-1}H^2 < 0$ and $\sigma^{1,0}(\Sigma,\partial \Sigma)<0$. Then the area of $\Sigma$ satisfies
		\begin{equation}\label{vol_ineq_thm11}
			A(\Sigma)^{\frac{2}{n-1}}\geq\dfrac{Q^{1,0}_g(\Sigma,\partial \Sigma)}{\inf R^M +\frac{n}{n-1}H^2}\geq\dfrac{\sigma^{1,0}(\Sigma,\partial \Sigma)}{\inf R^M + \frac{n}{n-1}H^2}.
		\end{equation}
		\item [ii)] Suppose that  $R^M  +\frac{n}{n-1}H^2\geq 0$, $\inf  H^{\partial M} + H\cos\theta<0$ and $\sigma^{0,1}(\Sigma,\partial \Sigma)<0$. Then the area of $\partial\Sigma$ satisfies
		\begin{equation*}
			A(\partial\Sigma)^{\frac{1}{n-2}}\geq\frac{\sin \theta}{2}\dfrac{Q^{0,1}_g(\Sigma,\partial \Sigma)}{\inf H^{\partial M} + H\cos\theta}\geq \frac{\sin \theta}{2}\dfrac{\sigma^{0,1}(\Sigma,\partial \Sigma)}{\inf H^{\partial M} + H\cos\theta}.
		\end{equation*}
	\end{enumerate}
\end{theorem}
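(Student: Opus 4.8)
The plan is to squeeze a Schoen--Yau--type rearrangement out of the stability inequality so that its right-hand side becomes precisely the numerator of the Yamabe quotient $Q^{a,b}_g$. Since a proper variation realizes an arbitrary normal component, stability gives, for every $f\in C^\infty_+(\Sigma)$,
\begin{equation*}
\int_\Sigma \big(Ric(N,N)+|A|^2\big)f^2\,dA + \int_{\partial\Sigma} q\,f^2\,dL \le \int_\Sigma |\nabla f|^2\,dA.
\end{equation*}
First I would feed in the traced Gauss equation $R_g = R^M - 2\,Ric(N,N) + H^2 - |A|^2$ to write $Ric(N,N)+|A|^2 = \tfrac12\big(R^M - R_g + H^2 + |A|^2\big)$, and then use $|A|^2 \ge H^2/(n-1)$ (Cauchy--Schwarz on the principal curvatures) to get $Ric(N,N)+|A|^2 \ge \tfrac12\big(R^M - R_g + \tfrac{n}{n-1}H^2\big)$. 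In parallel, Lemma \ref{lema1} together with $q = \tfrac1{\sin\theta}II^{\partial M}(\overline\nu,\overline\nu)+(\cot\theta)A(\nu,\nu)$ yields the clean identity $q = \tfrac{H^{\partial M}+H\cos\theta}{\sin\theta} - k_g$. Substituting both, moving the $R_g$ and $k_g$ terms to the right, multiplying by $2$, and using $\tfrac{4(n-2)}{n-3}\ge 2$ to enlarge the gradient coefficient, I reach the master estimate
\begin{equation*}
\int_\Sigma \Big(R^M + \tfrac{n}{n-1}H^2\Big)f^2 + \frac{2}{\sin\theta}\int_{\partial\Sigma}\big(H^{\partial M}+H\cos\theta\big)f^2 \le \int_\Sigma\Big(\tfrac{4(n-2)}{n-3}|\nabla f|^2 + R_g f^2\Big) + 2\int_{\partial\Sigma} k_g f^2 =: \mathcal Q(f),
\end{equation*}
whose right-hand side $\mathcal Q(f)$ is exactly the common numerator of $Q^{1,0}_g(f)$ and $Q^{0,1}_g(f)$.

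For item i) I would discard the boundary integral on the left, which is nonnegative by $H^{\partial M}+H\cos\theta\ge 0$, and replace $R^M$ by $\inf R^M$. Writing $c:=\inf R^M+\tfrac{n}{n-1}H^2<0$, the master estimate becomes $c\int_\Sigma f^2 \le \mathcal Q(f)$. Hölder's inequality gives $\int_\Sigma f^2 \le \big(\int_\Sigma f^{\frac{2(n-1)}{n-3}}\big)^{\frac{n-3}{n-1}} A(\Sigma)^{\frac{2}{n-1}}$; since $c<0$ this reverses, and after dividing by the positive denominator of $Q^{1,0}_g(f)$ I obtain $c\,A(\Sigma)^{\frac{2}{n-1}} \le Q^{1,0}_g(f)$ for every $f>0$. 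Taking the infimum over $f$ replaces the right side by $Q^{1,0}_g(\Sigma,\partial\Sigma)$, and dividing by $c<0$ (reversing once more) produces the first inequality in \eqref{vol_ineq_thm11}; the second follows from $Q^{1,0}_g(\Sigma,\partial\Sigma)\le \sigma^{1,0}(\Sigma,\partial\Sigma)<0$ upon dividing by $c<0$.

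Item ii) is the dual argument: the hypothesis $R^M+\tfrac{n}{n-1}H^2\ge 0$ now lets me discard the interior integral and replace $H^{\partial M}$ by $\inf H^{\partial M}$, setting $d:=\inf H^{\partial M}+H\cos\theta<0$, so the master estimate reduces to $\tfrac{2d}{\sin\theta}\int_{\partial\Sigma} f^2 \le \mathcal Q(f)$. The boundary Hölder inequality $\int_{\partial\Sigma} f^2 \le \big(\int_{\partial\Sigma} f^{\frac{2(n-2)}{n-3}}\big)^{\frac{n-3}{n-2}} A(\partial\Sigma)^{\frac{1}{n-2}}$, combined as before (using $\sin\theta>0$, $d<0$, and dividing by the positive denominator of $Q^{0,1}_g(f)$), yields $\tfrac{2d}{\sin\theta}A(\partial\Sigma)^{\frac{1}{n-2}}\le Q^{0,1}_g(\Sigma,\partial\Sigma)$ after taking the infimum, which rearranges to the stated estimate and then to the $\sigma^{0,1}$ bound. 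The one thing demanding constant care is sign bookkeeping: because the ambient constants $c$ and $d$ are negative, every Hölder step and every division reverses the inequality, so these reversals must be tracked carefully to land on \emph{lower} bounds for $A(\Sigma)$ and $A(\partial\Sigma)$; the only genuinely structural moves are the Gauss-equation substitution and the coefficient comparison $\tfrac{4(n-2)}{n-3}\ge 2$ that turns the right-hand side into an honest Yamabe numerator.
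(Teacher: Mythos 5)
Your proposal is correct and follows essentially the same route as the paper: the Gauss equation with the Newton/Cauchy--Schwarz bound $|A|^2\ge H^2/(n-1)$, the substitution of Lemma \ref{lema1} into the boundary term $q$, the enlargement $2\le \frac{4(n-2)}{n-3}$ to produce the Yamabe numerator, and the sign-reversing H\"older step in the interior (resp.\ on the boundary) for item i) (resp.\ ii)). The only cosmetic difference is that you make the identity $q=\frac{H^{\partial M}+H\cos\theta}{\sin\theta}-k_g$ and the sign bookkeeping explicit, which the paper leaves implicit.
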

\begin{proof} Let us first prove the case $i)$. By Newton inequality and the Gauss equation we have
	\begin{equation*}
		Ric(N,N)+|A|^2\geq \frac{1}{2}(R^M-R_g+\frac{n}{n-1}H^2).
	\end{equation*}
Substituting the above inequality and Lemma \ref{lema1} in the stability condition for $\Sigma^{n-1}$ we obtain  
	\begin{eqnarray}\label{main_ineq_vol}
		0&\leq& \int_{\Sigma}\left(2||\nabla\varphi||^2+R_g\varphi^2\right)+\int_{\partial{\Sigma}}2k_{g}\varphi^2-\int_{\Sigma}\left(\frac{n}{n-1}H^2 + R^M\right)\varphi^2 \\[0.2cm]
  && -\frac{2}{\sin\theta}\int_{\partial\Sigma}\left(H^{\partial M} + H\cos\theta\right)\varphi^2,\nonumber
	\end{eqnarray}
where  $\varphi\in C^{\infty}_{+}(\Sigma)$, and $H^{\partial M}$ is the mean curvature of $\partial M$. 

Recalling that $a_n=\frac{4(n-2)}{n-3}>2$ for $n\geq4$ and, by assumption, $H^{\partial M} + H\cos\theta \geq 0$,  it follows that
\begin{equation*}\label{eq2.1}
	\left(\inf R^M+\frac{n}{n-1}H^2\right) \int_{\Sigma}\varphi^2\leq\int_{\Sigma}(a_{n}||\nabla\varphi||^2+R_g\varphi^2)+\int_{\partial{\Sigma}}2k_{g}\varphi^2.
\end{equation*}
Since $\inf R^{M}+\frac{n}{n-1}H^2<0$ we can use H\"older’s inequality to deduce 
\begin{eqnarray*}
	\left(\inf R^M+\frac{n}{n-1}H^2\right) A(\Sigma)^{\frac{2}{n-1}}
	&\leq& Q^{1,0}_g(\varphi).
\end{eqnarray*} 
Therefore, we conclude that
\begin{equation*}
	A(\Sigma)^{\frac{2}{n-1}}\geq\dfrac{Q^{1,0}_g(\Sigma,\partial \Sigma)}{\inf R^M+\frac{n}{n-1}H^{2}}\geq\dfrac{\sigma^{1,0}(\Sigma,\partial \Sigma)}{\inf R^M+\frac{n}{n-1}H^{2}}.
\end{equation*}

Item $ii)$ follows similarly. Now, inequality \eqref{main_ineq_vol} coupled with the assumptions yield 
\begin{eqnarray*}
	\frac{2}{\sin \theta}(\inf\ H^{\partial M} + H\cos\theta)\int_{\partial\Sigma}\varphi^{2}
	&\leq& \int_{\Sigma}(a_{n}||\nabla\varphi||^2+R_g\varphi^2)+\int_{\partial{\Sigma}}2k_{g}\varphi^2.
\end{eqnarray*}
Then, from H\"older’s inequality we deduce 
\begin{equation*}
	A(\partial\Sigma)^{\frac{1}{n-2}}\geq\frac{\sin \theta}{2}\dfrac{Q^{0,1}_g(\Sigma,\partial \Sigma)}{(\inf\ H^{\partial M}+H\cos\theta)}\geq \frac{\sin \theta}{2}\dfrac{\sigma^{0,1}(\Sigma,\partial \Sigma)}{(\inf\ H^{\partial M}+H\cos\theta)}.
\end{equation*}
This completes the proof.
\end{proof}

\section{Rigidity of capillary hypersurfaces}

Our main goal in this section is to prove the local splitting theorems for capillary $cmc$ hypersurfaces, namely Theorems \ref{teo2_intro} and \ref{teo3_intro} stated in the Introduction. Let $M^n$ be a Riemannian manifold $(n\geq 4)$ with nonempty boundary $\partial M$. In this section, we will always consider $\Sigma^{n-1}\subset M^n$ as a capillary $cmc$, compact, two-sided, properly immersed hypersurface with constant mean curvature $H\geq 0$ and contact angle $\theta \in (0,\pi)$. We will first establish the infinitesimal rigidity of $\Sigma$ under the conditions of Theorem \ref{teo2_intro}.

\begin{proposition}\label{prop5}
 Assume that $H^{\partial M} + H\cos\theta\geq 0$, $\sigma^{1,0}(\Sigma,\partial \Sigma)<0$, $\inf R^M + \frac{n}{n-1}H^2< 0$ and the equality holds in \eqref{vol_ineq_thm11}, that is,
\begin{equation*}
A(\Sigma)^{\frac{2}{n-1}} = \frac{\sigma^{1,0}(\Sigma,\partial \Sigma)}{\inf R^M+ \frac{n}{n-1}H^2}.
\end{equation*} 
Then $\Sigma$ is infinitesimally rigid, $II^{\partial M}(\overline{\nu},\overline{\nu}) + A(\nu,\nu)\cos\theta=0$ along $\partial \Sigma$ and the induced metric on $\Sigma$ is Einstein with totally geodesic boundary $\partial\Sigma$. 
\end{proposition}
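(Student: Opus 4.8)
The plan is to run the proof of Theorem \ref{teo1}(i) in reverse, using the solvability of the Yamabe problem to pin every inequality in that chain to equality. Since $\sigma^{1,0}(\Sigma,\partial\Sigma)<0$, the $(1,0)$-Yamabe problem in the conformal class $[g]$ admits a solution, so there is a positive $\varphi_0\in C^{\infty}_+(\Sigma)$ with $Q^{1,0}_g(\varphi_0)=Q^{1,0}_g(\Sigma,\partial\Sigma)$. Feeding $f=\varphi_0$ into the estimates that produced \eqref{vol_ineq_thm11}, the standing hypothesis (equality in \eqref{vol_ineq_thm11}) forces
\[
\left(\inf R^M+\tfrac{n}{n-1}H^2\right)A(\Sigma)^{\frac{2}{n-1}}=Q^{1,0}_g(\varphi_0),
\]
so every intermediate step is an equality. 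I would read these off in turn: the H\"older step is an equality only if $\varphi_0$ is constant, after which replacing $2\|\nabla\varphi_0\|^2$ by $a_n\|\nabla\varphi_0\|^2$ costs nothing; since $\varphi_0>0$, equality in $R^M\geq\inf R^M$ gives $R^M=\inf R^M$ on $\Sigma$; equality in discarding the nonnegative integral $\frac{2}{\sin\theta}\int_{\partial\Sigma}(H^{\partial M}+H\cos\theta)\varphi_0^2$ gives $H^{\partial M}+H\cos\theta=0$ on $\partial\Sigma$; and equality in the Newton/Gauss bound gives $|A|^2=\tfrac{H^2}{n-1}$, i.e.\ $\Sigma$ is totally umbilical. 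Finally the first inequality, stability, becomes $\mathcal{J}''(0)=0$ at $\varphi_0$.

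Because the Yamabe minimizer $\varphi_0$ is constant, the Euler--Lagrange equations of $Q^{1,0}_g$ reduce to $R_g$ being constant on $\Sigma$ and $k_g=0$ on $\partial\Sigma$ (the interior equation at a constant gives constant $R_g$, the natural Neumann condition $a_n\partial_\nu\varphi_0+2k_g\varphi_0=0$ gives $k_g=0$). With $k_g=0$, Lemma \ref{lema1} immediately yields $II^{\partial M}(\overline{\nu},\overline{\nu})+A(\nu,\nu)\cos\theta=H^{\partial M}+H\cos\theta-\sin\theta\,k_g=0$, which is the second assertion. Substituting the constant $\varphi_0$ into the now-equality version of \eqref{main_ineq_vol} and using $k_g=0$ and $H^{\partial M}+H\cos\theta=0$ forces $R_g=\inf R^M+\tfrac{n}{n-1}H^2$; combined with the Gauss equation and $|A|^2=\tfrac{H^2}{n-1}$ this gives $Ric(N,N)=-\tfrac{H^2}{n-1}$. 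Together with total umbilicity, $R^M=\inf R^M$, and $H^{\partial M}=-H\cos\theta$ (its pointwise lower bound under the hypothesis $H^{\partial M}+H\cos\theta\geq0$, hence $\inf_{\partial\Sigma}H^{\partial M}$), these are exactly the conditions in Definition \ref{def2}, apart from the Einstein property.

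The main obstacle is the Einstein and totally-geodesic-boundary conclusion, which is not a consequence of pointwise equality but of the global meaning of the Yamabe invariant. Here I would exploit the second equality forced by the hypothesis, namely $Q^{1,0}_g(\Sigma,\partial\Sigma)=\sigma^{1,0}(\Sigma,\partial\Sigma)$: together with the previous paragraph it says that $g$ is a constant-scalar-curvature metric with $k_g=0$ whose conformal class maximizes the Yamabe constant, so $g$ realizes the negative Yamabe invariant. Since the minimizer is constant, $Q^{1,0}_g(\Sigma,\partial\Sigma)$ equals the normalized total scalar curvature functional $\mathcal{E}(g)=A(\Sigma)^{-\frac{n-3}{n-1}}\bigl(\int_{\Sigma}R_g+2\int_{\partial\Sigma}k_g\bigr)$ evaluated at $g$, and maximality across conformal classes makes $g$ a critical point of $\mathcal{E}$ among all unit-volume metrics: the conformal directions are critical because $g$ is a Yamabe metric, and the transverse--traceless directions are critical because $g$ maximizes the Yamabe constant, so by an envelope argument the first variation of the conformal infimum coincides with that of $\mathcal{E}$ at $g$.

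The Euler--Lagrange equation of $\mathcal{E}$ — the Einstein--Hilbert functional with its Gibbons--Hawking--York boundary term — is precisely $Ric_g=\tfrac{R_g}{n-1}g$ in the interior with $\partial\Sigma$ totally geodesic, which gives the Einstein metric with totally geodesic boundary and completes the proof. In writing this up I would either reproduce this cross-conformal variation, mirroring the closed-case computations of Cai and the free-boundary argument of Barros, or invoke the established rigidity of metrics realizing a nonpositive Yamabe invariant on manifolds with boundary; I expect the delicate point to be justifying the interchange of the two variations (the conformal infimum versus $\mathcal{E}$) and the correct form of the boundary term that produces totally geodesic $\partial\Sigma$ rather than merely $k_g=0$.
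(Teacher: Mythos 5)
Your proposal is correct and follows essentially the same route as the paper: use $\sigma^{1,0}(\Sigma,\partial\Sigma)<0$ to solve the Yamabe problem with boundary, force every inequality in the proof of Theorem \ref{teo1}~i) to be an equality (constancy of the minimizer, $R^M=\inf R^M$, total umbilicity, $H^{\partial M}+H\cos\theta=0$), and defer the Einstein/totally-geodesic conclusion to the variational argument of Barros--Cruz for metrics realizing a negative Yamabe invariant. The only cosmetic difference is that the paper extracts $Ric(N,N)=-|A|^2$ and $II^{\partial M}(\overline{\nu},\overline{\nu})+A(\nu,\nu)\cos\theta=0$ from the Robin-type eigenvalue problem attached to the stability operator (the constant minimizer being a zero first eigenfunction), whereas you read the same identities off the Yamabe Euler--Lagrange equations together with the Gauss equation and Lemma \ref{lema1}.
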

\begin{proof}
The hypothesis $\sigma^{1,0}(\Sigma,\partial \Sigma)<0 < Q^{1,0}_g(\mathbb{S}^{n-1}_+,\partial\mathbb{S}^{n-1}_+)$ guarantees the existence of a solution to the Yamabe problem (see \cite{EscobarT}), that is, there exists $\varphi_{min}>0$ for which the infimum in $Q_{g}^{1,0}(\Sigma,\partial \Sigma)$ is achieved. In this case, all inequalities in the proof of Theorem \ref{teo1} item $i)$ become equalities, and thus, we have that $R^M = \inf R^M$, $\Sigma$ is totally umbilical, $H^{\partial M} + H\cos\theta = 0$ along $\partial\Sigma$ and $\varphi_{min}$ must be constant, because $a_n>2$. As in the proof of \cite[Prop.1]{Barros}, the stability condition on $\Sigma$ gives that zero is the first eigenvalue and $\varphi_{min}$ is an eigenfunction of the Robin-type problem 
	\begin{equation}\label{eq2.6}
		\left\{
		\begin{array}{rl}
			-\Delta_{\Sigma}\phi-(Ric(N,N)+|A|^2)\phi&=\lambda\phi \ \ \text{em} \ \ \Sigma,  \\[0.2cm] 
			\dfrac{\partial\phi}{\partial\nu}&= q\phi \ \ \text{em} \ \ \partial\Sigma,
		\end{array}
		\right.
	\end{equation}
	with
	\begin{eqnarray*}
		q=\dfrac{1}{\sin\theta}II^{\partial M}(\overline{\nu},\overline{\nu})+(\cot\theta) A(\nu,\nu).
	\end{eqnarray*}
Hence,	$Ric(N,N)= - \vert A\vert^2 = - \frac{H^2}{n-1}$ and $II^{\partial M}(\overline{\nu},\overline{\nu}) + A(\nu,\nu)\cos\theta=0$ along $\partial\Sigma$. It follows easily from Lemma \ref{lema1} that $k_{g} = 0$ ($\partial\Sigma$ is a minimal hypersurface). We refer to the proof of \cite[Prop.1]{Barros} for the proof that the metric $g$ is Einstein with totally geodesic boundary.
\end{proof}

We are ready to prove Theorem \ref{teo2_intro} which is restated below.

\begin{theorem}
Let $M^n$ be a Riemannian manifold $(n\geq 4)$ with boundary $\partial M$. Let $\Sigma^{n-1}$ be a $\mathcal{J}$-minimizing two-sided hypersurface, properly embedded in $M$ with contact angle $\theta \in (0,\pi)$, such that $H^{\partial M} + H\cos\theta \geq 0$. If $\inf R^M + \frac{n}{n-1}H^2 < 0$, $\sigma^{1,0}(\Sigma,\partial \Sigma)<0$ and the equality holds in \eqref{vol_ineq_thm1}, then $\Sigma$ is an infinitesimally rigid free-boundary minimal hypersurface and there is a neighborhood of $\Sigma$ in $M$ isometric to $(-\epsilon,\epsilon)\times\Sigma$ endowed with the metric $dt^{2}+g$, where $g$ is the induced metric on $\Sigma$ which is Einstein with totally geodesic boundary $\partial\Sigma$.
\end{theorem}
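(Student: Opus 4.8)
The plan is to combine the infinitesimal rigidity already extracted in Proposition \ref{prop5} with the local foliation of Proposition \ref{Prop7}, and then to propagate that rigidity off of $\Sigma$ using the hypothesis that $\Sigma$ is $\mathcal{J}$-minimizing. I would first record the data at $\Sigma=\Sigma_0$: by Proposition \ref{prop5} the hypersurface is totally umbilic, $R^M=\inf R^M$ and $Ric(N,N)=-|A|^2=-\tfrac{H^2}{n-1}$ along $\Sigma$, $II^{\partial M}(\overline{\nu},\overline{\nu})+\cos\theta\,A(\nu,\nu)=0$ and $H^{\partial M}+H\cos\theta=0$ along $\partial\Sigma$, and $g$ is Einstein with totally geodesic boundary. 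Feeding the umbilicity and these identities into the contracted Gauss equation produces the constant value $R_g=\inf R^M+\tfrac{n}{n-1}H^2$, which is \emph{strictly} negative; this strict negativity is exactly the feature that will ultimately force $H=0$. Since $\Sigma$ is infinitesimally rigid, Proposition \ref{Prop7} yields a foliation $\{\Sigma_t\}_{t\in(-\epsilon,\epsilon)}$ of a neighbourhood by capillary $cmc$ leaves of angle $\theta$ with $\Sigma_0=\Sigma$; I write $\rho_t>0$ for the lapse and $H(t)$ for the constant mean curvature of $\Sigma_t$, normalising $\rho_0\equiv 1$ (it is constant by the proof of Proposition \ref{prop5}).

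Next I would use that $\Sigma$ minimizes $\mathcal{J}$. Because each leaf is capillary of angle $\theta$ and the generating field $\xi$ is tangent to $\partial M$, the boundary term in the first variation drops out (indeed $\nu-\cos\theta\,\overline{\nu}=\sin\theta\,\overline{N}$, which is orthogonal to $\xi$), so $\frac{d}{dt}\mathcal{J}(\Sigma_t)=(H-H(t))\int_{\Sigma_t}\rho_t\,dA$; as $t=0$ is a minimum and $\int_{\Sigma_t}\rho_t\,dA>0$, this pins the sign of $H-H(t)$ on each side of $0$. Differentiating the mean curvature along the foliation and using the capillary Robin condition $\partial_\nu\rho_t=q_t\rho_t$ on $\partial\Sigma_t$, I obtain
\[
H'(t)\,A(\Sigma_t)=\int_{\partial\Sigma_t}q_t\,\rho_t\,dL+\int_{\Sigma_t}\big(Ric(N_t,N_t)+|A_t|^2\big)\rho_t\,dA .
\]
At $t=0$ the rigidity kills \emph{both} integrands: $Ric(N,N)+|A|^2=0$, while Lemma \ref{lema1} together with $II^{\partial M}(\overline{\nu},\overline{\nu})+\cos\theta\,A(\nu,\nu)=0$ gives $q_0=0$; hence $H'(0)=0$.

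The heart of the matter, and the step I expect to be the main obstacle, is to upgrade this infinitesimal information to the whole neighbourhood: $H(t)\equiv H$, $R^M\equiv\inf R^M$, every leaf umbilic, $q_t\equiv 0$, and $\rho_t$ constant on each leaf. The difficulty is that a priori I control $Ric(N_t,N_t)+|A_t|^2$ and $q_t$ only at $t=0$, whereas on nearby leaves the Gauss equation couples them to the uncontrolled scalar curvature $R_{g_t}$. I would resolve this by a strong-maximum-principle/continuity argument in the spirit of \cite{Ambrozio,Barros,Longa}: the one-sided bounds on $H-H(t)$ coming from minimality, reinserted into the displayed identity, trap $H(t)$ at the value $H$ and force the (nonnegative) curvature and boundary potentials to vanish identically, so that every leaf is infinitesimally rigid. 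Equality in Newton's inequality then makes each $\Sigma_t$ umbilic with constant shape operator $\tfrac{H}{n-1}\,\mathrm{Id}$, and $\Delta_{\Sigma_t}\rho_t=0$ with the resulting Neumann condition makes $\rho_t$ constant.

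Finally I would read off the splitting. With the lapse constant the foliation reparametrises to an equidistant one, so the metric takes the form $dt^2+g_t$, and the umbilicity with constant mean curvature integrates $g_t$ to a warped product $\varphi(t)^2 g$ with $g=g_0$ Einstein, $\varphi(0)=1$, and $\varphi'/\varphi$ the constant $\pm\tfrac{H}{n-1}$. Imposing the propagated identity $R^M\equiv\inf R^M$ and computing
\[
R^M=\varphi^{-2}R_g-(n-1)\Big(2\tfrac{\varphi''}{\varphi}+(n-2)\tfrac{(\varphi')^2}{\varphi^2}\Big),
\]
the bracketed term is constant in $t$, so the only $t$-dependence of $R^M$ sits in $\varphi^{-2}R_g$; since $R_g=\inf R^M+\tfrac{n}{n-1}H^2<0$ is nonzero, constancy forces $\varphi\equiv 1$, that is $H=0$. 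Thus $\Sigma$ is minimal and totally geodesic, the metric is the genuine product $dt^2+g$ with $g$ Einstein and $\partial\Sigma$ totally geodesic, and the leaves are totally geodesic slices met by the normal geodesics issuing from $\Sigma$. As these geodesics must keep $\partial\Sigma_t$ inside $\partial M$, the normal $N$ is tangent to $\partial M$ along $\partial\Sigma$, whence $\bar g(N,\overline{N})=\cos\theta=0$ and $\theta=\pi/2$; so $\Sigma$ is free boundary and the asserted isometry with $(-\epsilon,\epsilon)\times\Sigma$ follows.
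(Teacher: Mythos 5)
Your overall skeleton (Proposition \ref{prop5} for infinitesimal rigidity, Proposition \ref{Prop7} for the foliation, then propagation via the $\mathcal{J}$-minimizing hypothesis) matches the paper, and your computation of $R_g=\inf R^M+\tfrac{n}{n-1}H^2<0$ on $\Sigma$ and of $H'(0)=0$ are correct. However, the step you yourself flag as ``the heart of the matter'' is left unproved, and the two bridging claims you offer in its place do not hold. First, the $\mathcal{J}$-minimizing hypothesis gives $\mathcal{J}(t)\ge\mathcal{J}(0)$, i.e. $\int_0^t (H-H(s))\bigl(\int_{\Sigma_s}\rho_s\bigr)\,ds\ge 0$ for every $t$; this is a statement about an integral and does \emph{not} ``pin the sign of $H-H(t)$ on each side of $0$'' pointwise in $t$. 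Second, the potentials $Ric(N_t,N_t)+|A_t|^2$ and $q_t$ on nearby leaves are not nonnegative a priori (they vanish at $t=0$, but off $t=0$ the Gauss equation couples them to the uncontrolled scalar curvature $R_{g_t}$, exactly as you observe), so there is nothing to ``force to vanish'' by a sign argument, and no maximum principle in $t$ is available without an actual differential inequality relating $H'(t)$ to $H(t)$.

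The paper closes this gap with a concrete mechanism that your proposal does not supply: on each leaf it tests the lapse equation \eqref{lapse_cor} against the square of the Yamabe minimizer $u_t$ of $Q^{1,0}_{g_t}$, uses $Q^{1,0}(u_t)\le\sigma^{1,0}(\Sigma,\partial\Sigma)$ together with the equality in \eqref{vol_ineq_thm1} and the first variation of area to produce the explicit differential inequality \eqref{ineq_H_teo2} for $H(t)$, which carries a $\cot\theta$ term. A sign analysis of $H''(0)=R_0(0)\eta(0)\cot\theta$ against minimality first forces $\theta=\pi/2$; then Gronwall's lemma applied to $H-H(t)$ yields $H(t)\ge H$ for $t>0$ and $H(t)\le H$ for $t<0$, which combined with minimality gives $H(t)\equiv H$ and $\mathcal{J}(t)\equiv\mathcal{J}(0)$; finally, applying Theorem \ref{teo1_intro} to the (now stable) leaves and comparing areas via the first variation forces $H=0$, after which the free-boundary minimal case of \cite{Barros} finishes the splitting. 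Your concluding warped-product computation showing $\varphi\equiv 1$ is a reasonable alternative way to extract $H=0$ \emph{once} all leaves are known to be infinitesimally rigid, but as written it rests entirely on the unestablished propagation step, so the proof is incomplete at its central point.
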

\begin{proof}
Proposition \ref{prop5} shows that $\Sigma$ is infinitesimally rigid, and then, by Proposition \ref{Prop7} there is a local foliation by $cmc$ capillary hypersurfaces $\Sigma_t$ for $t \in (-\varepsilon,\varepsilon)$, with $\varepsilon>0$ sufficiently small.

We recall that the lapse function $\rho_t$ satisfies (see \cite{Li})
\begin{equation}\label{lapse_cor}
	\left\{
	\begin{array}{rcl}
		H'(t)&=&\Delta_{\Sigma}\rho_t+(Ric(N_{t},N_{t})+|A_{t}|^2)\rho_t \ \ \text{em} \ \ \Sigma_{t}, \\[0.2cm]
		\dfrac{\partial \rho_t}{\partial \nu_{t}}&=&\left(\dfrac{1}{\sin\theta}II^{\partial M}(\overline{\nu}_{t},\overline{\nu}_{t})+(\cot\theta) A_{t}(\nu_{t},\nu_{t})\right)\rho_t \ \ \text{em} \ \ \partial\Sigma_{t}.
	\end{array}
	\right.
\end{equation}
By Gauss equation and Newton inequality, we can rewrite the first equation in \eqref{lapse_cor} as
	\begin{eqnarray}\label{2.7}
	2H'(t)\rho^{-1}_t\geq 2\rho^{-1}_t\Delta_{\Sigma}\rho_t+R^M_{t}-R_{t}+\frac{n}{n-1}H(t)^2.
\end{eqnarray}

Denote by $g_{t}$ the induced metric of $\Sigma_{t}$. As in the proof of Proposition \ref{prop5}, for each $t\in(-\epsilon,\epsilon)$ we solve the Yamabe problem with boundary to guarantee the existence of a metric $\tilde{g}_{t}= u_{t}^{\frac{4}{n-3}}g_{t}$ with constant scalar curvature equal to $\inf R^M < 0$ and whose boundary is a minimal hypersurface. The function $u_{t}$ is a positive function on $\Sigma_{t}$ 
such that the Yamabe constant is achieved.

Multiplying \eqref{2.7} by $u_t^2$ and integrating along $\Sigma_t$ it becomes
	\begin{eqnarray*}
	2\int_{\Sigma_{t}}H'(t)\rho^{-1}_tu^2_{t} &\geq&2\int_{\Sigma_{t}}\rho^{-1}_tu^2_{t}\Delta_{\Sigma}\rho_t+ \int_{\Sigma_{t}}\left(R^M + \frac{n}{n-1}H(t)^2\right)u^2_{t}-\int_{\Sigma_{t}}u^2_{t}R_{t}.
\end{eqnarray*}
Since $\Sigma_t$ is $cmc$, the divergence theorem combined with Young inequality will lead to 
\begin{eqnarray*}
	2H'(t)\int_{\Sigma_{t}}\rho^{-1}_tu^2_{t} &\geq& 2\left(\int_{\Sigma_{t}}||\nabla u_t||^2 + \int_{\partial \Sigma_{t}}\rho^{-1}_tu^2_{t}\dfrac{\partial\rho_t}{\partial\nu_{t}}\right)-\int_{\Sigma_{t}}u^2_{t}R_{t} \\[0.2cm]
 && +\left(\inf R^M + \frac{n}{n-1}H(t)^2\right)\int_{\Sigma_{t}}u^2_{t}.
\end{eqnarray*}
Substituting the second line of \eqref{lapse_cor} and applying Lemma \ref{lema1} we obtain
\begin{multline*}
2H'(t)\int_{\Sigma_{t}}\rho^{-1}_tu^2_{t} \geq	 -\left[\int_{\Sigma_{t}}(2||\nabla u_{t}||^2+u^2_{t}R_{t})+2\int_{\partial \Sigma_{t}}u^2_{t}k_{t}\right]\\[0.2cm]
+\frac{2}{\sin\theta}\int_{\partial \Sigma_{t}}\left(H^{\partial M}_{t} + H(t)\cos\theta\right)u^2_{t}+\left(\inf R^M + \frac{n}{n-1}H(t)^2\right)\int_{\Sigma_{t}}u^2_{t}.
\end{multline*}
Since $H^{\partial M}\geq -H\cos\theta$ and $a_{n}=\frac{4(n-2)}{n-3}>2$ for $n\geq4$, we can rewrite the above inequality as
\begin{equation}\label{ineq_teo3}
H'(t)\psi_1(t)  \geq -Q^{1,0}(u_t)+\psi_2(t) \left(H(t)-H\right)\cot\theta + \psi_3(t)\left(\inf R^M + \frac{n}{n-1}H(t)^2\right),
\end{equation}
where  
\begin{equation}\label{psi_12}
\psi_1(t)=\frac{2\displaystyle\int_{\Sigma_{t}}\rho^{-1}_tu^2_{t}}{\left(\displaystyle\int_{\Sigma_{t}}u^{\frac{2(n-1)}{n-3}}_{t}\right)^{\frac{n-3}{n-1}}}, \qquad \psi_2(t)=\frac{2\displaystyle\int_{\partial \Sigma_{t}}u^2_{t}}{\left(\displaystyle\int_{\Sigma_{t}}u^{\frac{2(n-1)}{n-3}}_{t}\right)^{\frac{n-3}{n-1}}},
\end{equation}
and 
$$\psi_3(t)=\frac{\displaystyle\int_{\Sigma_{t}}u^2_{t}}{\left(\displaystyle\int_{\Sigma_{t}}u^{\frac{2(n-1)}{n-3}}_{t}\right)^{\frac{n-3}{n-1}}} \leq A(\Sigma_t)^{\frac{2}{n-1}}.$$

By continuity we can assume $\inf R^M + \frac{n}{n-1}H(t)^2 < 0$. Now since the equality holds in \eqref{vol_ineq_thm11}, we arrive at
\begin{eqnarray*}
H'(t)\psi_1(t)&\geq& -\sigma^{1,0}(\Sigma,\partial\Sigma)+ \psi_2(t) \left(H(t)-H\right) \cot\theta   \\[0.2cm]
&&+ \left(\inf R^M + \frac{n}{n-1}H(t)^2\right) A(\Sigma_{t})^\frac{2}{n-1}   \\[0.2cm]
&=& - \left(\inf R^M + \frac{n}{n-1}H^2\right)A(\Sigma)^\frac{2}{n-1}  + \psi_2(t) \left(H(t)-H\right)\cot\theta\\[0.2cm] 
&& +\left(\inf R^M + \frac{n}{n-1}H(t)^2\right)A(\Sigma_{t})^\frac{2}{n-1}\\[0.2cm]
&=& \left(\inf R^M + \frac{n}{n-1}H^2\right)\left(A(\Sigma_{t})^\frac{2}{n-1}-A(\Sigma)^\frac{2}{n-1}\right) \\[0.2cm]
&&+\psi_2(t) \left(H(t)-H\right)\cot\theta+ \frac{n}{n-1}\left(H(t)^2 - H^2\right)A(\Sigma_{t})^\frac{2}{n-1}.
\end{eqnarray*}
Recalling that
 \begin{multline*}   
A(\Sigma_{t})^\frac{2}{n-1}-A(\Sigma)^\frac{2}{n-1}=\frac{2}{n-1}\int_{0}^{t}\left(\frac{d}{ds}A(\Sigma_{s})\right)A(\Sigma_{s})^{\frac{3-n}{n-1}}ds\\[0.2cm]
= -\frac{2}{n-1}\int_{0}^{t} A(\Sigma_{s})^{\frac{3-n}{n-1}}\left(H(s)\left(\int_{\Sigma}\rho(s)\right)+\cot\theta\left(\int_{\partial\Sigma}\rho(s)\right)\right),
 \end{multline*}
we can rewrite our estimate as
\begin{multline*}
H'(t)\psi_1(t) \geq -\frac{2}{n-1}\left(\inf R^M + \frac{n}{n-1}H^2\right)\int_{0}^{t}(H(s)\xi(s) + \eta(s)\cot\theta) ds  \\[0.2cm]
+\psi_2(t)(H(t)-H)\cot\theta + \frac{n}{n-1}(H(t)^2 - H^2) A(\Sigma_{t})^\frac{2}{n-1}.
\end{multline*}
where
$$\xi(s)=A(\Sigma_{s})^{\frac{3-n}{n-1}}\int_{\Sigma}\rho_s\quad\text{and}\quad \eta(s)=A(\Sigma_{s})^{\frac{3-n}{n-1}}\int_{\partial\Sigma}\rho_s.$$

Now, since the lapse function $\rho_t$ can be assumed positive we may conclude that
\begin{equation}\label{ineq_H_teo2}   
H'(t)\geq \left[R_0(t) \int_{0}^{t}\eta(s) ds + R_1(t)(H(t)-H)\right]\cot\theta + R_2(t)(H(t)^2 - H^2) ,
\end{equation}
with 
$$R_0(t) = -\frac{2}{(n-1)\psi_1(t)}\left(\inf R^M + \frac{n}{n-1}H^2\right), \qquad R_1(t) = \frac{\psi_2(t)}{\psi_1(t)}, $$
and
$$R_2(t) = \frac{n}{(n-1)\psi_1(t)}A(\Sigma_{t})^\frac{2}{n-1}.$$

Let us suppose that $H>0$. We first observe that $H'(0)\geq 0$. If $H'(0)>0$, then $H(t)>0$ for small positive $t$ and by the first variation formula for the $\mathcal{J}$-functional we would have 
\begin{equation}
	J(t)-J(0)=\int_{0}^{t}\left(H-H(s)\right)\left(\int_{\Sigma}\rho(s)\right) ds <0,
\end{equation}
which contradicts the fact that $\Sigma$ is $\mathcal{J}$-minimizing. Thus, $H'(0)=0$. We set
$$g(t) = \left[R_0(t) \int_{0}^{t}\eta(s) ds + R_1(t)(H(t)-H)\right]\cot\theta + R_2(t)(H(t)^2 - H^2) .$$
By definition, it is easy to see that 
$$H''(0) = g'(0) = R_0(0)\eta(0)\cot\theta.$$
If $H''(0)>0$, then $H'(t)>0$ for $t>0$ sufficiently small, that is, $H(t)$ would be increasing which contradicts the $\mathcal{J}$-minimizing assumption. Similarly, if $H''(0)<0$ we then deduce that $H'(t)>0$ and $H(t)<0$ for $t<0$ sufficiently small, and we arrive at the same contradiction. Therefore, $\Sigma$ must be a free boundary hypersurface. Therefore, inequality \eqref{ineq_H_teo2} reduces to
\begin{eqnarray*}   
H'(t)&\geq& R_2(t)(H(t)^2 - H^2). 
\end{eqnarray*}
Applying Gronwall Lemma (see \cite{gronwall}) to the function $u(t) = H-H(t)$ we obtain
$$H(t) - H\geq (H(s) - H)\exp\left(\int_{s}^{t}R_2(\tau)(H(\tau)+H)d\tau\right),$$ 
for $s<t$. However, $H(0)=H$ forces that $H(t)\geq H$ for $t>0$ and $H(t)\leq H$ for $t<0$, that is, $J(t)\leq J(0)$, for all $t\in(-\varepsilon,\varepsilon)$ and $J(t)= J(0)$ for all $ t\in(-\varepsilon,\varepsilon)$ due to the fact that $\Sigma$ is $\mathcal{J}$-minimizing. Therefore, $H(t) \equiv H$ and each $\Sigma_{t}$ is a cmc capillary stable hypersurface. 
On the other hand, applying Theorem \ref{teo1_intro} to $\Sigma_t$ we must have
$$0 \leq A(\Sigma_t) - A(\Sigma) = \int_0^t \frac{d}{ds}A(\Sigma_{s})ds = - H\int_0^t \left(\int_{\Sigma}\rho(s)\right) ds,$$
which means that $H\leq 0$, a contradiction to the assumption that $H>0$. If $H=0$ we also conclude that $\Sigma$ is free boundary, and thus, the result follows directly from \cite[Thm.2]{Barros}.
\end{proof}

We now establish the infinitesimal rigidity of $\Sigma$ under the assumptions of Theorem \ref{teo3_intro}.

\begin{proposition}\label{prop6}
If $H^{\partial M} + H\cos\theta\geq 0$, $\sigma^{1,0}(\Sigma,\partial \Sigma)\leq 0$ and $R^M + \frac{n}{n-1}H^2 \geq 0$, then $\Sigma$ is infinitesimally rigid, $II^{\partial M}(\overline{\nu},\overline{\nu}) + A(\nu,\nu)\cos\theta=0$ along $\partial \Sigma$, $\sigma^{1,0}(\Sigma,\partial\Sigma) = 0$, and $\partial\Sigma$ has zero mean curvature. Moreover, $\Sigma$ is Ricci flat with totally geodesic boundary with respect to the induced metric.
\end{proposition}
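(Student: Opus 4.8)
The plan is to run the equality analysis of Proposition~\ref{prop5}, but to exploit the sign hypotheses to pin the Yamabe invariant down to exactly $0$ rather than to feed in an equality assumption. First I would reconstruct the stability inequality \eqref{main_ineq_vol} and, as in the proof of Theorem~\ref{teo1} item $i)$, replace the coefficient $2$ of the gradient term by $a_n=\frac{4(n-2)}{n-3}>2$. This shows that for every $\varphi\in C^\infty_+(\Sigma)$ the numerator of $Q^{1,0}_g(\varphi)$ dominates $\int_{\Sigma}\bigl(R^M+\frac{n}{n-1}H^2\bigr)\varphi^2+\frac{2}{\sin\theta}\int_{\partial\Sigma}\bigl(H^{\partial M}+H\cos\theta\bigr)\varphi^2$, which is nonnegative precisely because $R^M+\frac{n}{n-1}H^2\geq 0$ and $H^{\partial M}+H\cos\theta\geq 0$. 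Hence $Q^{1,0}_g(\Sigma,\partial\Sigma)\geq 0$; combined with $Q^{1,0}_g(\Sigma,\partial\Sigma)\leq\sigma^{1,0}(\Sigma,\partial\Sigma)\leq 0$ this forces $\sigma^{1,0}(\Sigma,\partial\Sigma)=Q^{1,0}_g(\Sigma,\partial\Sigma)=0$, one of the asserted conclusions.

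Next I would extract the pointwise rigidity from the saturated inequalities. Since $Q^{1,0}_g(\Sigma,\partial\Sigma)=0<Q^{1,0}_g(\mathbb{S}^{n-1}_+,\partial\mathbb{S}^{n-1}_+)$ (the hemisphere carries positive scalar curvature and thus positive Yamabe constant), Escobar's solution of the boundary Yamabe problem (cf.\ \cite{EscobarT}) yields a positive minimizer $\varphi_{min}$ with $Q^{1,0}_g(\varphi_{min})=0$. Evaluating the chain of inequalities at $\varphi_{min}$ turns every step into an equality: the strict inequality $a_n>2$ forces $\nabla\varphi_{min}\equiv 0$, so $\varphi_{min}$ is a positive constant; the Newton inequality becomes an equality, so $\Sigma$ is totally umbilical and $|A|^2=\frac{H^2}{n-1}$; and the two nonnegative integrals must vanish. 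As $\varphi_{min}$ is a nonzero constant, this gives $R^M+\frac{n}{n-1}H^2\equiv 0$ along $\Sigma$ and $H^{\partial M}+H\cos\theta\equiv 0$ along $\partial\Sigma$.

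Just as in Proposition~\ref{prop5}, the constant $\varphi_{min}$ is then the first eigenfunction, with eigenvalue zero, of the Robin problem \eqref{eq2.6}. Its interior equation gives $Ric(N,N)+|A|^2=0$, whence $Ric(N,N)=-\frac{H^2}{n-1}$, while the boundary condition evaluated on a constant gives $q=0$, that is, $II^{\partial M}(\overline{\nu},\overline{\nu})+A(\nu,\nu)\cos\theta=0$ along $\partial\Sigma$. Substituting this and $H^{\partial M}+H\cos\theta=0$ into Lemma~\ref{lema1} leaves $(\sin\theta)k_g=0$, so $k_g=0$ and $\partial\Sigma$ has zero mean curvature. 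Because the hypotheses hold throughout $M$ and $\partial M$ while equality is realized along $\Sigma$ and $\partial\Sigma$, one reads off $\inf R^M=-\frac{n}{n-1}H^2=R^M|_{\Sigma}$ and $\inf H^{\partial M}=-H\cos\theta=H^{\partial M}|_{\partial\Sigma}$; together with umbilicity and the value of $Ric(N,N)$, these are exactly the identities of Definition~\ref{def2}, so $\Sigma$ is infinitesimally rigid.

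It remains to upgrade these pointwise identities to the metric structure of $\Sigma$. The Einstein property of $(\Sigma,g)$ and the total geodesy of $\partial\Sigma$ follow as in \cite[Prop.1]{Barros}, the same step already invoked in Proposition~\ref{prop5}, using that $g$ simultaneously achieves the conformal-class infimum (it has constant scalar curvature and minimal boundary) and the supremum $\sigma^{1,0}(\Sigma,\partial\Sigma)$ over conformal classes. The genuinely new feature is that the trace Gauss equation, fed with $R^M=-\frac{n}{n-1}H^2$, $Ric(N,N)=-\frac{H^2}{n-1}$ and $|A|^2=\frac{H^2}{n-1}$, forces the scalar curvature $R_g$ of $\Sigma$ to vanish; an Einstein metric with zero scalar curvature is Ricci flat, which gives the stated conclusion. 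I expect this Einstein step to be the main obstacle, since it is the only part that is not a direct pointwise consequence of the equality discussion and genuinely relies on the conformal-variational argument of \cite{Barros}.
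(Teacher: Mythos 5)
Your proposal is correct and follows essentially the same route as the paper: use the stability inequality together with the sign hypotheses to force $0\leq Q^{1,0}_g(\Sigma,\partial\Sigma)\leq\sigma^{1,0}(\Sigma,\partial\Sigma)\leq 0$, solve the boundary Yamabe problem to get a constant minimizer, run the equality analysis (Newton, Robin problem, Lemma~\ref{lema1}) to obtain umbilicity, $Ric(N,N)=-\frac{H^2}{n-1}$, $q=0$, $k_g=0$ and the pointwise saturation of the curvature bounds, and finish with the Gauss equation plus the Einstein argument of \cite[Prop.1]{Barros} to conclude Ricci flatness. Your explicit remark that $Q^{1,0}_g(\Sigma,\partial\Sigma)=0<Q^{1,0}_g(\mathbb{S}^{n-1}_+,\partial\mathbb{S}^{n-1}_+)$ still guarantees Escobar's existence theorem is a detail the paper leaves implicit.
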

\begin{proof}
Substituting the assumptions in inequality \eqref{main_ineq_vol} we obtain
\begin{eqnarray*}
0 &\leq&\int_{\Sigma}(a_{n}||\nabla\varphi||^2+R_g\varphi^2)+\int_{\partial{\Sigma}}2k_{g}\varphi^2.
	\end{eqnarray*}
It is easy to conclude that
\begin{eqnarray*}
0 & \leq & Q_{g}^{1,0}(\Sigma,\partial \Sigma)\leq\sigma^{1,0}(\Sigma,\partial \Sigma)\leq 0.
\end{eqnarray*}
Again, using a solution to the Yamabe problem as in the proof of Proposition \ref{prop5}, we will conclude that $R^{M}=- \frac{n}{n-1}H^2$, $H^{\partial M}= - H\cos\theta$ along $\partial\Sigma$, $\Sigma$ is totally umbilical, $Ric(N,N)= - \frac{H^2}{n-1}$ in $\Sigma$, $II^{\partial M}(\overline{\nu},\overline{\nu}) +A(\nu,\nu)\cos\theta=0$ along $\partial \Sigma$ and $k_{g}=0$. On the other hand, by Gauss equation
	\begin{equation*}
		R_{g}=R^{M}+H^{2}-|A|^{2}-2Ric(N,N)=0.
	\end{equation*}
As in Proposition \ref{prop5}, the metric induced in $\Sigma$ is Einstein with a totally geodesic boundary. Thus, it must be Ricci flat.
\end{proof}

We are ready to prove Theorem \ref{teo3_intro} which is restated below.

\begin{theorem}
Let $M^n$ be a Riemannian manifold $(n\geq 4)$ with boundary $\partial M$. Let $\Sigma^{n-1}$ be a $\mathcal{J}$-minimizing two-sided hypersurface, properly embedded in $M$ with contact angle $\theta \in (0,\pi)$, such that  $H^{\partial M} + H\cos\theta \geq 0$. If $R^M + \frac{n}{n-1}H^2 \geq 0$ and $\sigma^{1,0}(\Sigma,\partial \Sigma)\leq 0$, then $\Sigma$ is an infinitesimally rigid capillary $cmc$ hypersurface and there is a neighborhood of $\Sigma$ in $M$ isometric to $(-\epsilon,\epsilon)\times\Sigma$ endowed with the metric $dt^{2}+e^{-2Ht}g$, where $g$ is the induced metric on $\Sigma$ which is Ricci flat with totally geodesic boundary $\partial\Sigma$.
\end{theorem}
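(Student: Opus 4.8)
The plan is to follow the architecture of the proof of Theorem~\ref{teo2_intro}, adapted to the borderline regime in which the rigidity of $\Sigma$ forces $\inf R^M+\frac{n}{n-1}H^2=0$. First I would apply Proposition~\ref{prop6} to obtain the infinitesimal rigidity of $\Sigma$: it is totally umbilical with $A=\frac{H}{n-1}g$, $Ric(N,N)=-\frac{H^2}{n-1}$ and $R^M=\inf R^M=-\frac{n}{n-1}H^2$ along $\Sigma$, while $II^{\partial M}(\overline{\nu},\overline{\nu})+A(\nu,\nu)\cos\theta=0$ and $k_g=0$ along $\partial\Sigma$, $\sigma^{1,0}(\Sigma,\partial\Sigma)=0$, and $g$ is Ricci flat with totally geodesic boundary. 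Since $\Sigma$ is infinitesimally rigid, Proposition~\ref{Prop7} furnishes a local foliation $\{\Sigma_t\}_{t\in(-\epsilon,\epsilon)}$ by capillary $cmc$ hypersurfaces of contact angle $\theta$, with positive lapse $\rho_t$ and mean curvatures $H(t)$ satisfying $H(0)=H$.

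Next I would extract information from the lapse equation \eqref{lapse_cor}. The relation $II^{\partial M}(\overline{\nu},\overline{\nu})+A(\nu,\nu)\cos\theta=0$ makes the Robin coefficient $q$ vanish at $t=0$, so integrating the first equation of \eqref{lapse_cor} over $\Sigma$ against the resulting Neumann condition yields $H'(0)=0$ and forces $\rho_0$ to be spatially constant. For general $t$, combining the Gauss equation, Newton's inequality and a leafwise Yamabe minimizer $u_t$ exactly as in the passage leading to \eqref{ineq_H_teo2}, one reaches a differential inequality for $H(t)$. The decisive simplification, compared with Theorem~\ref{teo2_intro}, is that the coefficient $R_0(t)$ is a multiple of $\inf R^M+\frac{n}{n-1}H^2=0$ and hence vanishes identically; together with $-Q^{1,0}(u_t)\geq-\sigma^{1,0}(\Sigma,\partial\Sigma)=0$ and $\inf R^M+\frac{n}{n-1}H(t)^2=\frac{n}{n-1}(H(t)^2-H^2)$, the inequality collapses to
\begin{equation*}
H'(t)\geq R_1(t)\,(H(t)-H)\cot\theta+R_2(t)\,(H(t)^2-H^2),
\end{equation*}
with $R_1$ continuous and $R_2>0$. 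Factoring out $H(t)-H$ and applying Gronwall's lemma to $u(t)=H(t)-H$ gives $H(t)\geq H$ for $t\geq0$ and $H(t)\leq H$ for $t\leq0$.

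I would then combine this with the $\mathcal{J}$-minimizing hypothesis through the first variation identity $\mathcal{J}(t)-\mathcal{J}(0)=\int_0^t(H-H(s))\big(\int_\Sigma\rho_s\big)\,ds$. The sign of $H(s)-H$ makes the integrand nonpositive for $t>0$ and nonnegative for $t<0$, so $\mathcal{J}(t)\leq\mathcal{J}(0)$ on both sides; minimality forces equality, hence $H(t)\equiv H$ and every leaf is $cmc$ with the same mean curvature. Consequently all the intermediate inequalities (Newton, Young, H\"older and Yamabe) become equalities for each $t$: each $\Sigma_t$ is totally umbilical with $A_t=\frac{H}{n-1}g_t$ and $Ric(N_t,N_t)+|A_t|^2=0$, while $q_t=0$ and $u_t,\rho_t$ are spatially constant. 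Reparametrizing $t$ so that the lapse is identically $1$, the ambient metric takes the form $dt^2+g_t$, and the evolution equation $\partial_t g_t=2\rho_t A_t$, with $A_t$ a constant multiple of $g_t$, integrates to the exponential warping $dt^2+e^{-2Ht}g$ asserted in the statement; the case $H=0$ reduces directly to \cite[Thm.2,II)]{Barros}.

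The step I expect to be the main obstacle is the middle one. In the borderline regime $\inf R^M+\frac{n}{n-1}H^2=0$ the clean contradiction exploited in Theorem~\ref{teo2_intro} is no longer available, so one must track the signs in the differential inequality for $H(t)$ with care and verify that $\mathcal{J}$-minimality pins $H(t)$ to the constant $H$ rather than excluding $H>0$. Equally delicate is propagating the pointwise rigidity---umbilicity and the vanishing of $Ric(N_t,N_t)+|A_t|^2$---to every leaf of the foliation and reading off the warping factor from the umbilicity relation, since this is precisely what upgrades the infinitesimal rigidity of the single hypersurface $\Sigma$ to an honest local splitting.
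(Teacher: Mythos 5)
Your proposal follows the paper's proof essentially step for step: Proposition \ref{prop6} for the infinitesimal rigidity of $\Sigma$, Proposition \ref{Prop7} for the capillary $cmc$ foliation, the specialization of \eqref{ineq_teo3} plus Gronwall and $\mathcal{J}$-minimality to force $H(t)\equiv H$, constancy of the lapse from the homogeneous Neumann problem, and the umbilic evolution equation for $g_t$ to read off the warped metric. The only (harmless) deviations are that you retain the term $R_2(t)\,(H(t)^2-H^2)$ that the paper drops before applying Gronwall --- which is in fact slightly more careful, since that term is not sign-definite --- and that you obtain the leafwise rigidity by tracking equality through the inequality chain rather than by re-applying Proposition \ref{prop6} to each leaf $\Sigma_t$.
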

\begin{proof}
The proof is analogous to the proof of Theorem \ref{teo2_intro}. In this case, Proposition \ref{prop6} gives that $\Sigma$ is infinitesimally rigid and we can consider the cmc foliation $\Sigma_t$ given by Proposition \ref{Prop7}. Since $\inf R^M + \frac{n}{n-1}H^2\geq 0$ and $\sigma^{1,0}(\Sigma,\partial \Sigma)\leq 0$, inequality  \eqref{ineq_teo3} reduces to
\begin{eqnarray*}
H'(t)\psi_1(t)  \geq \psi_2(t) \left(H(t)-H\right)\cot\theta 
\end{eqnarray*}
with $\psi_1$ and $\psi_2$ defined in \eqref{psi_12}.
Applying Gronwall Lemma (see \cite{gronwall}) to the function $u(t) = H-H(t)$ we obtain
$$H(t)-H\geq (H(s)-H)\exp\left(\cot\theta\int_{s}^{t}\frac{\psi_2(\tau)}{\psi_1(\tau)}d\tau\right),$$ 
for $s<t$. As in the proof of Theorem \ref{teo2_intro}, it follows from the minimizing property of $\Sigma$ that $H(t)\equiv H$ and $J(t)= J(0)$ for all $t\in(-\varepsilon,\varepsilon)$. Thus, $\Sigma_{t}$ are all capillary $cmc$ stable hypersurfaces. Applying Proposition \ref{prop6} to $\Sigma_t$ we conclude that they are also infinitesimally rigid. 

It follows from \eqref{lapse_cor} that $\rho_t$ satisfies the following homogeneous Neumann problem
$$
\left\lbrace\begin{array}{rl}
\Delta_{\Sigma}\rho_t=0     & \text{in} \ \Sigma_{t},  \\[0.2cm]
\frac{\partial \rho_t}{\partial \nu_{t}}=0  & \text{in} \ \partial\Sigma_{t}.
\end{array}\right.
$$
 Therefore $\rho_{t}$ is constant, and without loss of generality, let us assume $\rho_t = 1$. Up to an isometry, we can write the metric of $M$, in a sufficiently small neighborhood of $\Sigma$, as $\overline{g} = dt^{2} + g_{t}$. Since $\Sigma_t$ are totally umbilical by \cite[Lem.7.4]{GF} the metric $g_t$ has the following evolution equation
	\begin{eqnarray*}
		\frac{\partial}{\partial t}g_{ij}(t)
		&=&-2H\rho_{t}g_{ij}(t).
	\end{eqnarray*}
Therefore, $g_t=e^{-2Ht}g$ for every $t \in (-\epsilon,\epsilon)$. This concludes the proof. 
\end{proof}

\bibliographystyle{amsplain}

\end{document}